
\documentclass[12pt,a4paper]{article}
\bibliographystyle{plain}

\usepackage{amsmath,amsthm,amssymb}
\usepackage[truedimen,margin=25truemm]{geometry}
\usepackage{mathrsfs}
\usepackage{here}
\usepackage{bm}
\usepackage{graphicx}
\usepackage[utf8]{inputenc}

\theoremstyle{definition}
\newtheorem{definition}{定義}
\newtheorem{thm}[definition]{Theorem}

\newtheorem{lem}[definition]{Lemma}
\newtheorem{cor}[definition]{Corollary}


\newcommand{\diag}{{\rm diag}}

\DeclareMathOperator{\SL}{SL}

\DeclareMathOperator{\Ker}{Ker}
\DeclareMathOperator{\Ima}{Im}
\DeclareMathOperator{\Span}{span}

\begin{document}

\title{First and Second Fundamental Theorems for Invariant Rings Generated by Circulant Determinants}
\author{Naoya Yamaguchi, Hiroyuki Ochiai, and Yuka Yamaguchi}
\date{\today}

\maketitle

\begin{abstract}
In this paper, 
we give the first and second fundamental theorems of invariant theory for certain invariant rings whose generators are expressed by circulant determinants. 
\end{abstract}

\section{Introduction}
The first and second fundamental theorems of invariant theory concern the generators of the invariant ring and the relations among these generators, where the first provides the generators and the second provides the relations. 
The generators and their relations are central topics in the study of invariant rings. 
As tools for explicit descriptions, determinants (e.g., discriminants, resultants, and Hessians) and partial differential operators such as the Cayley-Aronhold operators play an important role. 
It is well known that Hermann Weyl established the first and second fundamental theorems for classical groups \cite{MR1488158}. 
Not many invariant rings have yet been obtained for which the generators and their relations can be explicitly written down, and even today, active research continues to give the first and second fundamental theorems for various invariant rings (see, e.g., \cite{MR894702, MR1137542, MR1848969, MR3436396, MR4104497}). 

For any positive integer $n$, 
let $\bm{x} := {}^t (x_{0}, x_{1}, \ldots, x_{n - 1})$ be an $n$-tuple of variables and let $R[\bm{x}] := R[x_{0}, x_{1}, \ldots, x_{n-1}] = R[x_{i} \,\vert\, 0 \leq i \leq n - 1]$ denote the polynomial ring in the $n$ variables over a ring $R$. 
We consider the following differential operators: 
\begin{align*}
D := \sum_{i = 0}^{n-1} x_{i-1} \frac{\partial}{\partial x_{i}}, \quad 
\Delta := \sum_{i = 0}^{n-1} x_{i+1} \frac{\partial}{\partial x_{i}} 
\end{align*}
with the conventions $x_{-1} := x_{n - 1}$ and $x_{n} := x_{0}$. 
These are special cases of the differential operators introduced by Frobenius \cite[p.~50]{Frobenius1968gruppen}, \cite{MR0446837}, \cite[p.~226]{MR803326} to study the group determinant, 
which is a generalization of the circulant determinant. 
The above operators derive the following invariant rings: 
\begin{align*}
\mathbb{C}[\bm{x}]^{D} := \{ f(\bm{x}) \in \mathbb{C}[\bm{x}] \mid D f(\bm{x}) = 0 \}, \quad 
\mathbb{C}[\bm{x}]^{\Delta} := \{ f(\bm{x}) \in \mathbb{C}[\bm{x}] \mid \Delta f(\bm{x}) = 0 \}. 
\end{align*}
Let $\Theta_{n}(\bm{x})$ denote the circulant determinant of order $n$; 
that is, 
\[ 
\Theta_{n}(\bm{x}) := 
\det{\begin{pmatrix} x_{0} & x_{n - 1} & \cdots & x_{1} \\ x_{1} & x_{0} & \cdots & x_{2} \\ \vdots & \vdots & \ddots & \vdots \\ x_{n - 1} & x_{n - 2} & \cdots & x_{0} \end{pmatrix}} \in \mathbb{Z}[\bm{x}]. 
\]
Let $n_{p}$ denote $\frac{n}{p}$ for a positive divisor $p$ of $n$. 
For $0 \leq i \leq n_{p} - 1$, 
we define 
\begin{align*}
\Theta_{p}(\bm{y}^{(p)}_{i}) 
:= \det{\begin{pmatrix} y^{(p)}_{i, 0} & y^{(p)}_{i, p - 1} & \cdots & y^{(p)}_{i, 1} \\ y^{(p)}_{i, 1} & y^{(p)}_{i, 0} & \cdots & y^{(p)}_{i, 2} \\ \vdots & \vdots & \ddots & \vdots \\ y^{(p)}_{i, p - 1} & y^{(p)}_{i, p - 2} & \cdots & y^{(p)}_{i, 0} \end{pmatrix}}, \quad 
y^{(p)}_{i, j} := \sum_{l = 0}^{n_{p} - 1} \zeta_{n}^{i (j + p l)} x_{j + p l} \quad (0 \leq j \leq p - 1), 
\end{align*}
where $\zeta_{n}$ is a primitive $n$-th root of unity. 
In this paper, 
we give the first and second fundamental theorems of invariant theory for $\mathbb{C}[\bm{x}]^{D}$ and $\mathbb{C}[\bm{x}]^{\Delta}$ in the case that $n$ has at most two prime factors. 

\begin{thm}[First Fundamental Theorem]\label{thm:1}
Let $n$ be a positive integer. 
Then, the following holds: 
If $n$ has exactly one prime factor $p$, then 
\begin{align*}
\mathbb{C}[\bm{x}]^{D} = \mathbb{C}[\bm{x}]^{\Delta} = \mathbb{C}[\Theta_{p}(\bm{y}^{(p)}_{i}) \,\vert\, 0 \leq i \leq n_{p} - 1]; 
\end{align*}
If $n$ has exactly two prime factors $p$ and $q$, then 
\begin{align*}
\mathbb{C}[\bm{x}]^{D} = \mathbb{C}[\bm{x}]^{\Delta} = \mathbb{C}[\Theta_{p}(\bm{y}^{(p)}_{i}), \Theta_{q}(\bm{y}^{(q)}_{j}) \,\vert\, 0 \leq i\leq n_{p} - 1,\, 0 \leq j \leq n_{q} - 1]. 
\end{align*}
\end{thm}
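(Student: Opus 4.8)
The plan is to reduce the problem to understanding the kernel of $D$ (equivalently $\Delta$) by a change of variables that diagonalizes the underlying group action. The operator $D = \sum_i x_{i-1}\partial/\partial x_i$ is the infinitesimal generator of the one-parameter group of cyclic shifts of the variables $x_0,\dots,x_{n-1}$ (reading indices modulo $n$); indeed $\exp(tD)$ permutes the $x_i$ cyclically in a suitable sense, so $\mathbb{C}[\bm{x}]^D$ is the ring of invariants for the action of the cyclic group $\mathbb{Z}/n\mathbb{Z}$ generated by the shift $x_i\mapsto x_{i+1}$, at least after passing to the associated graded or using that $D$ is semisimple. Concretely, I would introduce the discrete-Fourier-transformed variables $u_j := \sum_{i=0}^{n-1}\zeta_n^{ij}x_i$ for $0\le j\le n-1$; in these coordinates the shift operator acts diagonally, $u_j \mapsto \zeta_n^{-j}u_j$ (up to sign conventions), and $D$ becomes, up to scalar, the Euler-type operator $\sum_j j\, u_j \,\partial/\partial u_j$ after rescaling. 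Hence a monomial $\prod_j u_j^{a_j}$ lies in $\ker D$ exactly when $\sum_j j a_j \equiv 0 \pmod n$, and $\mathbb{C}[\bm{x}]^D = \mathbb{C}[\bm{u}]^D$ is spanned by such monomials. The same analysis applies verbatim to $\Delta$ with the opposite shift, giving $\mathbb{C}[\bm{x}]^D = \mathbb{C}[\bm{x}]^\Delta$ immediately.

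**Next I would** identify the generating set. The key combinatorial fact is that the semigroup $\{(a_0,\dots,a_{n-1})\in\mathbb{Z}_{\ge 0}^n : \sum_j j a_j \equiv 0 \bmod n\}$ is generated by its "indecomposable" elements, and one must show these indecomposables are precisely captured by the circulant determinants $\Theta_p(\bm{y}^{(p)}_i)$ for prime divisors $p\mid n$. Here the crucial computation — which I expect is established earlier or is a short lemma — is that $\Theta_p(\bm{y}^{(p)}_i)$, when expanded in the $u_j$ variables, factors as a product $\prod$ of $p$ linear forms in suitable $u_j$'s whose index-sum is $\equiv 0\pmod n$; more precisely $\Theta_p(\bm{y}^{(p)}_i) = \prod_{k=0}^{p-1} (\text{linear combination of } x_{j+pl})$ and under the Fourier transform this becomes a monomial-like product in the $u_j$ with total weight divisible by $n$. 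The case distinction on the number of prime factors of $n$ enters because the structure of the monoid of weight-$0$ exponent vectors, and in particular which products of linear forms are needed to generate it, becomes genuinely more complicated as the number of prime factors grows; with one or two primes the divisor lattice of $n$ is a chain or a "rectangle," and one can enumerate the indecomposable weight-zero monomials explicitly and match them to the $\Theta_p$'s (and $\Theta_q$'s).

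**The main obstacle** will be the converse inclusion: showing that \emph{every} element of $\mathbb{C}[\bm{x}]^D$ lies in the subring generated by the listed circulant determinants, not just that the circulant determinants are invariant (which is the easy direction, following from the factorization above and the weight computation). For this I would argue at the level of monomials in the $u_j$: given a monomial $m = \prod_j u_j^{a_j}$ with $\sum j a_j\equiv 0\pmod n$, I want to write $m$ as a polynomial in the generators. The natural approach is induction on degree: peel off one factor equal to (the leading term of) some $\Theta_p(\bm{y}^{(p)}_i)$, reducing the weight-vector, and control the error terms coming from the lower-order parts of the circulant determinant. Making the bookkeeping work — ensuring that the "weight zero" condition can always be met by a product of the available determinants, and that no further relations-forced generators are needed — is where the hypothesis on the number of prime factors is really used, and is the technical heart of the proof. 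I would handle the one-prime case first (where the factorization of $\Theta_p(\bm{y}^{(p)}_i)$ is cleanest and the monoid is easy to describe) and then bootstrap to two primes using the Chinese Remainder decomposition $\mathbb{Z}/n\mathbb{Z} \cong \mathbb{Z}/p^a\mathbb{Z}\times\mathbb{Z}/q^b\mathbb{Z}$ to reduce the weight condition to a pair of conditions handled by the single-prime generators $\Theta_p$ and $\Theta_q$ respectively.
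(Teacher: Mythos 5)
Your plan contains a fundamental error in the identification of $\Ker D$ that breaks the entire argument. You claim that after the Fourier change of variables $u_j := \sum_i \zeta_n^{ij}x_i$, the operator $D$ becomes, up to scalar, the Euler-type operator $\sum_j j\,u_j\,\partial/\partial u_j$, so that a monomial $\prod_j u_j^{a_j}$ lies in $\Ker D$ iff $\sum_j j a_j \equiv 0\ (\mathrm{mod}\ n)$, equivalently that $\mathbb{C}[\bm{x}]^D$ equals the ring of invariants of the cyclic shift group. Both statements are false. The correct transformed operator is $D = \sum_j \zeta_n^{\,j}\,u_j\,\partial/\partial u_j$, so a monomial $\bm{u}^{\bm{\alpha}}$ lies in $\Ker D$ iff $\sum_j \alpha_j\,\zeta_n^{\,j}=0$ as a complex number — a vanishing-sum-of-roots-of-unity condition, which is far stronger than a congruence mod $n$. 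You cannot replace the eigenvalue $\zeta_n^{\,j}$ by its exponent $j$: the map $j\mapsto\zeta_n^{\,j}$ is a multiplicative isomorphism of groups, but the kernel condition here is an \emph{additive} linear relation over $\mathbb{Q}(\zeta_n)$, not a multiplicative one. To see the discrepancy concretely, $u_0 = x_0+x_1+\cdots+x_{n-1}$ is invariant under the cyclic shift (weight $0\bmod n$), but $D(u_0)=\zeta_n^0 u_0 = u_0\neq 0$, so $u_0\notin\Ker D$. At the level of dimensions, the space of allowable exponent vectors for $\Ker D$ has $\mathbb{Q}$-dimension $n-\varphi(n)$ (it is the kernel of evaluation at $\zeta_n$, cut out by the cyclotomic polynomial), whereas your modular weight condition defines a full-rank sublattice of $\mathbb{Z}^n$, hence a space of dimension $n$. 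This mismatch propagates into your proposed strategy for the hard direction: the CRT decomposition of $\mathbb{Z}/n\mathbb{Z}$ into $\mathbb{Z}/p^a\mathbb{Z}\times\mathbb{Z}/q^b\mathbb{Z}$ is the wrong tool, because the condition to be analysed is not modular.

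The paper's actual argument fixes exactly this: it works with the $\mathbb{Q}$-vector space $V_n := \{\bm{\alpha}\in\mathbb{Q}^n : \sum_i\alpha_i\zeta_n^{\,i}=0\}$, shows $\dim_{\mathbb{Q}}V_n = n-\varphi(n)$ via the cyclotomic polynomial, and then proves that for $n$ with at most two prime factors the monoid $V_n\cap(\mathbb{Z}_{\geq 0})^n$ is generated (over $\mathbb{Z}_{\geq 0}$) by the vectors ${\bm v}^{(p)}_i = \sum_{j=0}^{p-1}\bm{e}_{i+n_p j}$. The latter is the genuine technical heart, and it hinges on the linear-algebraic structure of $V_n$ (Lemmas 7, 10, 11, 15--17 of the paper), not on any modular arithmetic. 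Your outline does correctly identify that the hard direction is showing every $\Ker D$-monomial is a product of the generators, and that the number of prime factors governs whether this holds; but without the correct description of the exponent monoid the proof cannot proceed.
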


\begin{thm}[Second Fundamental Theorem]\label{thm:2}
Let $n$ be a positive integer, and let $p$ and $q$ be coprime positive integers satisfying $p q \mid n$. 
For arbitrary variables $z_{i}$ ($0 \leq i \leq n_{p} - 1$) and $w_{j}$ ($0 \leq j \leq n_{q} - 1$), 
we consider the ring homomorphisms 
\begin{align*}
\rho : \mathbb{C}[z_{i} \,\vert\, 0 \leq i \leq n_{p} - 1] \to \mathbb{C}[\Theta_{p}(\bm{y}^{(p)}_{i}) \,\vert\, 0 \leq i \leq n_{p} - 1]
\end{align*}
defined by $\rho (z_{i}) := \Theta_{p}(\bm{y}^{(p)}_{i})$; 
\begin{align*}
\rho' &: \mathbb{C}[z_{i}, w_{j} \,\vert\, 0 \leq i \leq n_{p} - 1,\, 0 \leq j \leq n_{q} - 1] \\
&\quad \to \mathbb{C}[\Theta_{p}(\bm{y}^{(p)}_{i}), \Theta_{q}(\bm{y}^{(q)}_{j}) \,\vert\, 0 \leq i \leq n_{p} - 1,\, 0 \leq j \leq n_{q} - 1]
\end{align*}
defined by $\rho' (z_{i}) := \Theta_{p}(\bm{y}^{(p)}_{i})$ and $\rho' (w_{j}) := \Theta_{q}(\bm{y}^{(q)}_{j})$. 
Then, the following holds: 
\begin{align*} 
\Ker(\rho) = \{0 \}; \quad 
\Ker(\rho') = (t_{0}, t_{1}, \ldots, t_{n_{p q} - 1}), 
\end{align*}
where $t_{i} := \prod_{j = 0}^{q - 1} z_{i + n_{pq} j} - \prod_{j = 0}^{p - 1} w_{i + n_{pq} j}$. 
\end{thm}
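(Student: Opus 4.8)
The strategy I would follow is to \emph{diagonalize} the circulant determinants, so that $\rho$ and $\rho'$ become monomial maps into an ordinary polynomial ring and both assertions reduce to bookkeeping about monomials. For $s\in\mathbb{Z}/n\mathbb{Z}$ put $L_{s}:=\sum_{m=0}^{n-1}\zeta_{n}^{sm}x_{m}$. The matrix $(\zeta_{n}^{sm})_{s,m}$ is a nonsingular Vandermonde matrix over $\mathbb{C}$, so $\mathbb{C}[\bm{x}]=\mathbb{C}[L_{0},\dots,L_{n-1}]$ with $L_{0},\dots,L_{n-1}$ algebraically independent. Since $\zeta_{p}=\zeta_{n}^{n_{p}}$, a direct computation gives, for $0\le i\le n_{p}-1$ and $0\le k\le p-1$,
\begin{align*}
\sum_{j=0}^{p-1}\zeta_{p}^{kj}\,y^{(p)}_{i,j}=\sum_{j=0}^{p-1}\sum_{l=0}^{n_{p}-1}\zeta_{n}^{(i+n_{p}k)(j+pl)}x_{j+pl}=L_{i+n_{p}k},
\end{align*}
and expanding the circulant determinant yields the key identity $\Theta_{p}(\bm{y}^{(p)}_{i})=\prod_{k=0}^{p-1}L_{i+n_{p}k}$, and similarly $\Theta_{q}(\bm{y}^{(q)}_{j})=\prod_{k=0}^{q-1}L_{j+n_{q}k}$. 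Thus $\rho(z_{i})$ and $\rho'(w_{j})$ are honest monomials in $\mathbb{C}[L_{0},\dots,L_{n-1}]$.

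Next I would dispose of $\Ker(\rho)$ and set up the block structure. The index sets $C_{i}:=\{\,i+n_{p}k\bmod n:0\le k\le p-1\,\}$, $0\le i\le n_{p}-1$, are the cosets of the order-$p$ subgroup $n_{p}\mathbb{Z}/n\mathbb{Z}$, so they partition $\mathbb{Z}/n\mathbb{Z}$; hence $\rho$ sends distinct monomials in the $z_{i}$ to distinct monomials in the $L_{s}$ (the exponent of $z_{i}$ is recovered from the exponent of any $L_{s}$ with $s\in C_{i}$), and being injective on a basis it is injective, i.e. $\Ker(\rho)=\{0\}$. For $\rho'$, the cosets $E_{i}:=\{\,i+n_{pq}m\bmod n:0\le m\le pq-1\,\}$ of the order-$pq$ subgroup $n_{pq}\mathbb{Z}/n\mathbb{Z}$ partition $\mathbb{Z}/n\mathbb{Z}$ into $n_{pq}$ blocks; and since $\gcd(p,q)=1$, within a block $E_{i}$ the $q$ sets $C_{i+n_{pq}j}$ ($0\le j\le q-1$) and the $p$ sets $D_{i+n_{pq}j}:=\{\,i+n_{pq}j+n_{q}k\bmod n:0\le k\le q-1\,\}$ ($0\le j\le p-1$) form the ``columns'' and ``rows'' of a $p\times q$ grid in which each column meets each row in exactly one point, and $\bigcup_{j}C_{i+n_{pq}j}=\bigcup_{j}D_{i+n_{pq}j}=E_{i}$. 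As $z_{i+n_{pq}j}$ and $w_{i+n_{pq}j}$ map to the products of the $L_{s}$ over the column $C_{i+n_{pq}j}$ and the row $D_{i+n_{pq}j}$, we get $\rho'(t_{i})=\prod_{s\in E_{i}}L_{s}-\prod_{s\in E_{i}}L_{s}=0$, so $(t_{0},\dots,t_{n_{pq}-1})\subseteq\Ker(\rho')$.

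For the reverse inclusion I would fix a monomial order in which every $z_{i}$ precedes every $w_{j}$ (e.g.\ lexicographic), so that $\mathrm{LT}(t_{i})=\prod_{j=0}^{q-1}z_{i+n_{pq}j}$, and for distinct $i$ these leading terms involve pairwise disjoint variables. Given $P\in\Ker(\rho')$, the division algorithm gives $P=\sum_{i}Q_{i}t_{i}+r$ with $r\in\Ker(\rho')$ and every monomial $z^{\alpha}w^{\beta}$ of $r$ satisfying $\min_{0\le j\le q-1}\alpha_{i+n_{pq}j}=0$ for each $i$. Using the grid of the previous paragraph, $\rho'(z^{\alpha}w^{\beta})$ is the monomial whose exponent of $L_{s}$, for $s$ in position $(a,b)$ of block $E_{i}$, equals $\alpha_{i+n_{pq}b}+\beta_{i+n_{pq}a}$; from these exponents one recovers, block by block, all differences $\alpha_{i+n_{pq}b}-\alpha_{i+n_{pq}b'}$, hence (via the normalization $\min_{b}\alpha_{i+n_{pq}b}=0$) the numbers $\alpha_{i+n_{pq}b}$ and then the numbers $\beta_{i+n_{pq}a}$. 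So $\rho'$ is injective on the monomials occurring in $r$, which are therefore sent to linearly independent monomials of $\mathbb{C}[L_{0},\dots,L_{n-1}]$; hence $\rho'(r)=0$ forces $r=0$ and $P\in(t_{0},\dots,t_{n_{pq}-1})$.

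I expect the genuine content to be the second paragraph: making the correspondence between $\mathbb{Z}/n\mathbb{Z}$, the blocks $E_{i}$, and the variables $z_{i},w_{j}$ precise enough that the recovery of $(\alpha,\beta)$ is transparent. A conceptually cleaner packaging of the same fact is that the block decomposition writes $\mathbb{C}[z_{i},w_{j}]$, the ideal $(t_{0},\dots,t_{n_{pq}-1})$, and $\mathrm{Im}(\rho')$ as tensor products over the $n_{pq}$ blocks (the factors being domains over a field, so kernels and ideals pass through the tensor product), reducing the theorem to the single-block statement that the kernel of $\mathbb{C}[z_{0},\dots,z_{q-1},w_{0},\dots,w_{p-1}]\to\mathbb{C}[L_{a,b}]$, $z_{b}\mapsto\prod_{a}L_{a,b}$, $w_{a}\mapsto\prod_{b}L_{a,b}$, is the rank-one toric ideal $(\prod_{b}z_{b}-\prod_{a}w_{a})$; this can be seen either from the standard description of a toric ideal via its kernel lattice or, as above, by the monomial normal-form argument, which needs no external machinery.
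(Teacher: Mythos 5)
Your proof is correct, and its skeleton matches the paper's: your $L_{s}$ are exactly the paper's transformed variables $y_{s}$, your identity $\Theta_{p}(\bm{y}^{(p)}_{i})=\prod_{k=0}^{p-1}L_{i+n_{p}k}$ is the paper's Lemma~\ref{lem:13}, the disjointness of the sets $C_{i}$ gives $\Ker(\rho)=\{0\}$ exactly as in the paper, and your grid computation of $\rho'(t_{i})=0$ is the paper's Lemma~\ref{lem:10}. The two arguments genuinely diverge only on the hard inclusion $\Ker(\rho')\subset(t_{0},\ldots,t_{n_{pq}-1})$. The paper groups the monomials of an element of $\Ker(\rho')$ by their image exponent vector $\sigma(\bm{c},\bm{d})$, determines the kernel lattice of $\sigma$ explicitly (Lemmas~\ref{lem:11} and \ref{lem:12}: it is the $\mathbb{Z}$-span of the $n_{pq}$ column-sum-minus-row-sum relations), and deduces that any two monomials in the same fiber differ by an element of the ideal, so each fiber's contribution lies in the ideal because its coefficients sum to zero (Lemmas~\ref{lem:18}--\ref{lem:20}). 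You instead run the multivariate division algorithm in a lex order with the $z$'s above the $w$'s, reduce to a remainder none of whose monomials is divisible by any $\prod_{j}z_{i+n_{pq}j}$, and prove $\rho'$ is injective on such reduced monomials by reconstructing $(\bm{\alpha},\bm{\beta})$ from the $L$-exponents block by block --- in effect verifying that the $t_{i}$ form a Gr\"obner basis of this toric ideal. Both routes are complete and of comparable length; the paper's avoids monomial orders and the division algorithm entirely, while yours concentrates the combinatorics into the single transparent claim that a reduced monomial is determined by its image, and your closing remark that the block decomposition tensors everything down to the one-block case $n=pq$ (where the kernel is the principal binomial ideal $(\prod_{b}z_{b}-\prod_{a}w_{a})$) is a valid and clean repackaging that the paper does not make explicit.
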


Also, we demonstrate that the equality analogous to Theorem~$\ref{thm:1}$ does not hold when $n$ has at least three prime factors. 

\begin{thm}\label{thm:3}
Let $n$ be a positive integer with at least three prime factors. 
Then, we have 
\[
\mathbb{C}[\bm{x}]^{D} = \mathbb{C}[\bm{x}]^{\Delta} \supsetneq \mathbb{C}[\Theta_{p}(\bm{y}^{(p)}_{i}) \mid \text{$p$ is a prime factor of $n$},\; 0 \leq i \leq n_{p} - 1]. 
\]
\end{thm}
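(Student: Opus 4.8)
The plan is to diagonalise $D$, recast the claim as an inclusion of semigroups of vanishing sums of roots of unity, and then write down an explicit invariant that falls outside the stated subring.

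First I would introduce the invertible linear change of variables $L_{t} := \sum_{m=0}^{n-1} \zeta_{n}^{mt} x_{m}$ ($0 \le t \le n-1$), so that $\mathbb{C}[\bm{x}] = \mathbb{C}[L_{0}, \dots, L_{n-1}]$ is again a polynomial ring. A one-line computation gives $D L_{t} = \zeta_{n}^{t} L_{t}$ and $\Delta L_{t} = \zeta_{n}^{-t} L_{t}$, so $D$ and $\Delta$ act diagonally and a monomial $\prod_{t} L_{t}^{a_{t}}$ is a $D$-eigenvector of eigenvalue $\sum_{t} a_{t}\zeta_{n}^{t}$ and a $\Delta$-eigenvector of eigenvalue $\sum_{t} a_{t}\zeta_{n}^{-t}$. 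Since $\sum_{t} a_{t}\zeta_{n}^{t} = 0$ if and only if $\sum_{t} a_{t}\zeta_{n}^{-t} = 0$ (complex conjugation, the $a_{t}$ being non-negative integers), this yields at once
\begin{align*}
\mathbb{C}[\bm{x}]^{D} = \mathbb{C}[\bm{x}]^{\Delta} = \Span_{\mathbb{C}} \Bigl\{ \textstyle\prod_{t} L_{t}^{a_{t}} \,\Bigm|\, a \in \mathbb{Z}_{\ge 0}^{n},\ \sum_{t} a_{t}\zeta_{n}^{t} = 0 \Bigr\},
\end{align*}
so in particular the equality in Theorem~\ref{thm:3}. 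One also checks directly that $\Theta_{p}(\bm{y}^{(p)}_{i}) = \prod_{k=0}^{p-1} L_{i + n_{p}k}$, i.e.\ this generator is the product of the $L_{t}$ over one residue class of $t$ modulo $n_{p}$. Writing $\Sigma := \{ a \in \mathbb{Z}_{\ge 0}^{n} \mid \sum_{t} a_{t}\zeta_{n}^{t} = 0 \}$ and letting $\Sigma' \subseteq \Sigma$ be the subsemigroup generated by the indicator vectors $\mathbf{1}_{i + n_{p}\mathbb{Z}}$ ($p$ a prime factor of $n$, $0 \le i \le n_{p}-1$), the subring in Theorem~\ref{thm:3} is the $\mathbb{C}$-span of the monomials $\prod_{t} L_{t}^{b_{t}}$ with $b \in \Sigma'$; since distinct monomials in the $L_{t}$ are linearly independent, the asserted strict inclusion is equivalent to $\Sigma' \subsetneq \Sigma$.

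Next I would produce an element of $\Sigma \setminus \Sigma'$. Fix three distinct prime factors $p, q, r$ of $n$ and put $s := n/(pqr)$. Let
\begin{align*}
v := \sum_{m=1}^{r-1} \mathbf{1}_{\{\, s p q\, m \,\}} \;+\; \sum_{\substack{0 \le k \le pq-1 \\ \gcd(k,\, pq) = 1}} \mathbf{1}_{\{\, s r\, k \,\}} \ \in\ \mathbb{Z}_{\ge 0}^{n},
\end{align*}
all indices read modulo $n$. The two index families are disjoint (the first consists of multiples of $pq$, the second of elements coprime to $pq$), so $v$ is a nonzero $0$--$1$ vector. Now $\zeta_{n}^{spq}$ is a primitive $r$-th root of unity and $\zeta_{n}^{sr}$ a primitive $pq$-th root of unity, whence
\begin{align*}
\sum_{t} v_{t}\zeta_{n}^{t} = \sum_{m=1}^{r-1} \bigl(\zeta_{n}^{spq}\bigr)^{m} + \sum_{\gcd(k,\, pq)=1} \bigl(\zeta_{n}^{sr}\bigr)^{k} = (-1) + 1 = 0,
\end{align*}
the first sum being that of the nontrivial $r$-th roots of unity and the second that of the primitive $pq$-th roots of unity (which is $1$ because $pq$ has two distinct prime factors). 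Thus $v \in \Sigma$; informally, $v$ is this vanishing sum of $pqr$-th roots of unity pushed into the $n$-th roots of unity.

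It remains to verify $v \notin \Sigma'$, which is the heart of the proof. Because $v$ is a $0$--$1$ vector, any writing of $v$ as a non-negative integer combination of the generators $\mathbf{1}_{i + n_{p'}\mathbb{Z}}$ of $\Sigma'$ is forced to be a sum of pairwise disjoint such indicators, so $\operatorname{supp}(v)$ would be a disjoint union of residue classes modulo various $n_{p'}$. But every element of $\operatorname{supp}(v)$ is a multiple of $s$, and a residue class modulo $n_{p'}$ can lie among the multiples of $s$ only when $s \mid n_{p'}$, i.e.\ only when $p' \in \{p, q, r\}$; in that case it equals $s$ times a residue class modulo $pqr/p'$ in $\mathbb{Z}/pqr\mathbb{Z}$ (a regular $p'$-gon there). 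Dividing by $s$, any such decomposition of $v$ would furnish a partition of $S := \{\, j \in \mathbb{Z}/pqr\mathbb{Z} \mid \gcd(j, pqr) \in \{r, pq\} \,\}$ (the image of $\operatorname{supp}(v)$) into regular $p$-, $q$-, and $r$-gons. I would rule this out by checking that $S$ contains no regular $p$-, $q$-, or $r$-gon at all: a residue class $G_{c}$ modulo $pqr/\ell$ (with $\ell \in \{p,q,r\}$) has all $\ell$ of its elements sharing the residue of $c$ modulo each of the two primes dividing $pqr/\ell$, while they realise every residue modulo $\ell$ (since $\gcd(pqr/\ell, \ell) = 1$); running through the four possibilities for $\gcd(c, pqr/\ell)$ one finds in each case an element of $G_{c}$ whose gcd with $pqr$ lies outside $\{r, pq\}$, so $G_{c} \not\subseteq S$. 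Hence no such partition of $S$ exists, $v \notin \Sigma'$, and therefore $\Sigma' \subsetneq \Sigma$; this is Theorem~\ref{thm:3}.

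Every step before the last is formal bookkeeping; the case analysis for ``$S$ contains no regular polygon'' is the only genuine computation, and it is short. I should also record where the hypothesis is used: the vector $v$ genuinely requires three distinct prime factors of $n$, and by Theorem~\ref{thm:1} no such counterexample can exist when $n$ has at most two prime factors, so the trichotomy ``one / two / at least three prime factors'' is sharp. Alternatively one could extract an element of $\Sigma \setminus \Sigma'$ from the Conway--Jones/Lam--Leung classification of minimal vanishing sums of roots of unity, but the explicit $v$ above keeps the argument self-contained.
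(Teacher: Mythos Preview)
Your argument is correct. The reduction to the semigroup question $\Sigma' \subsetneq \Sigma$ via diagonalisation is exactly what the paper does in Sections~2.1--2.3 (your $L_t$ is the paper's $y_t$, your $\Sigma$ is $V_n \cap (\mathbb{Z}_{\ge 0})^n$, your $\Sigma'$ is $\mathbb{Z}_{\ge 0}\text{-}\Span T_n$, and your identity $\Theta_p(\bm{y}^{(p)}_i)=\prod_k L_{i+n_pk}$ is Lemma~13). Where you diverge is in the choice of witness and its verification. The paper (Lemma~9) builds $\bm{\alpha}' = \sum_{j=2}^{p} \bm{v}^{(q)}_{n_p j} + \bm{v}^{(r)}_{n_p+n_q} - \bm{v}^{(p)}_{n_q}$ and rules out membership in $\Sigma'$ by a one--line local obstruction: any element of $\Sigma'$ with $0$th coordinate $\ge 1$ must use some generator $\bm{v}^{(p')}_0$ and hence have $n_{p'}$th coordinate $\ge 1$, whereas $\bm{\alpha}'$ has $\alpha'_0=1$ and $\alpha'_{n_{p'}}=0$ for every prime $p'\mid n$. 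Your witness instead encodes the vanishing sum $(-1)+\mu(pq)=0$ of $r$th and primitive $pq$th roots of unity, and you rule out membership by showing that the support admits no partition into ``regular $\ell$-gons'' for $\ell\in\{p,q,r\}$. Both are valid; the paper's verification is shorter and avoids any case analysis, while yours makes the link to minimal vanishing sums of roots of unity (Conway--Jones type) explicit and is arguably more conceptual. One small remark: in your polygon argument it is worth stating once that for $\ell\in\{p,q\}$ the two subcases $\gcd(c,pqr/\ell)\in\{1,pqr/\ell\}$ already fail because neither divisibility pattern $\{r,pq\}$ is compatible with the fixed residues mod the other two primes, so the genuine ``all residues mod $\ell$'' obstruction is only needed in the mixed subcases; this keeps the twelve cases to a couple of lines.
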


With respect to the circulant determinants, we prove the following theorem, which is a specialization of both Laquer's theorem~\cite{MR4526227} and the generalized Dedekind's theorem~\cite{MR4814714}. 

\begin{thm}\label{thm:4}
For any positive integer $n$ and its positive divisor $p$, the following holds: 
\begin{align*}
\Theta_{n}(\bm{x}) = 
\prod_{i = 0}^{n_{p} - 1} \Theta_{p}(\bm{y}^{(p)}_{i}).  
\end{align*}
\end{thm}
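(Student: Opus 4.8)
The plan is to evaluate both sides via the classical diagonalization of a circulant matrix by the discrete Fourier matrix and then match the resulting linear factors. Recall that for any positive integer $N$ and variables $v_{0}, \ldots, v_{N-1}$ one has the factorization
\[
\Theta_{N}(v_{0}, \ldots, v_{N-1}) = \prod_{k = 0}^{N - 1} \Bigl( \sum_{j = 0}^{N - 1} \zeta_{N}^{jk} v_{j} \Bigr),
\]
obtained by conjugating the circulant matrix displayed in the statement by $(\zeta_{N}^{jk})_{0 \le j, k \le N-1}$; this is the factorization of the group determinant of the cyclic group $\mathbb{Z}/N\mathbb{Z}$ due to Dedekind, and is a special case of the two results cited just above for Theorem~\ref{thm:4}. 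Since the product runs over all residues $k$, the choice of $\zeta_{N}^{jk}$ versus $\zeta_{N}^{-jk}$ is immaterial. I would first apply this with $N = n$, $v_{j} = x_{j}$ to write the left-hand side as $\prod_{k=0}^{n-1} \sum_{j=0}^{n-1} \zeta_{n}^{jk} x_{j}$, and with $N = p$, $v_{j} = y^{(p)}_{i,j}$ to write $\Theta_{p}(\bm{y}^{(p)}_{i}) = \prod_{m=0}^{p-1} \sum_{j=0}^{p-1} \zeta_{p}^{jm} y^{(p)}_{i,j}$.

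The second step is to simplify the inner linear form of $\Theta_{p}(\bm{y}^{(p)}_{i})$. Substituting the definition $y^{(p)}_{i,j} = \sum_{l=0}^{n_{p}-1} \zeta_{n}^{i(j+pl)} x_{j+pl}$ and using $\zeta_{p} = \zeta_{n}^{n_{p}}$, I would reindex the double sum over $(j,l) \in \{0,\ldots,p-1\} \times \{0,\ldots,n_{p}-1\}$ by $s := j + pl$, which runs bijectively over $\{0, 1, \ldots, n-1\}$. The key point is the congruence $n_{p} j \equiv n_{p} s \pmod{n}$, valid because $s - j = pl$ and $n_{p} p = n$; hence $\zeta_{p}^{jm} = \zeta_{n}^{n_{p} jm} = \zeta_{n}^{n_{p} sm}$, and together with the factor $\zeta_{n}^{i(j+pl)} = \zeta_{n}^{is}$ the inner sum collapses to
\[
\sum_{j = 0}^{p-1} \zeta_{p}^{jm} y^{(p)}_{i,j} = \sum_{s = 0}^{n-1} \zeta_{n}^{(i + n_{p} m) s} x_{s}.
\]
Therefore $\Theta_{p}(\bm{y}^{(p)}_{i}) = \prod_{m=0}^{p-1} \sum_{s=0}^{n-1} \zeta_{n}^{(i + n_{p} m) s} x_{s}$.

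The last step is to take the product over $i = 0, \ldots, n_{p} - 1$ and observe that $(i, m) \mapsto i + n_{p} m$ is a bijection from $\{0, \ldots, n_{p}-1\} \times \{0, \ldots, p-1\}$ onto $\{0, 1, \ldots, n-1\}$ (this is just Euclidean division by $n_{p}$). Consequently
\[
\prod_{i=0}^{n_{p}-1} \Theta_{p}(\bm{y}^{(p)}_{i}) = \prod_{i=0}^{n_{p}-1} \prod_{m=0}^{p-1} \sum_{s=0}^{n-1} \zeta_{n}^{(i + n_{p} m) s} x_{s} = \prod_{k=0}^{n-1} \sum_{s=0}^{n-1} \zeta_{n}^{ks} x_{s} = \Theta_{n}(\bm{x}),
\]
which is the asserted identity. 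It then follows a posteriori that the product on the left, each factor of which a priori lies only in $\mathbb{Z}[\zeta_{n}][\bm{x}]$, actually has coefficients in $\mathbb{Z}$.

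I do not anticipate a genuine obstacle: the argument is essentially bookkeeping with roots of unity. The two places that need care are pinning down the convention so that the circulant matrix displayed in the statement really has the eigenvalues $\sum_{j} \zeta_{N}^{jk} v_{j}$ (an unnoticed transpose would replace $\zeta_{N}$ by $\zeta_{N}^{-1}$, which here is harmless but should be checked), and verifying the reindexing $s = j + pl$ together with the congruence $n_{p} j \equiv n_{p} s \pmod{n}$. If one prefers not to invoke the group-determinant factorization as a black box, it can be reproved in two lines by the same Fourier conjugation, keeping the proof self-contained.
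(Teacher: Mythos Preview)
Your proof is correct and follows essentially the same route as the paper: the identity $\sum_{j=0}^{p-1} \zeta_{p}^{jm} y^{(p)}_{i,j} = y_{i + n_{p} m}$ that you derive in your second step is exactly the content of Lemma~\ref{lem:13} (stated there as $\bm{y}^{\bm{v}^{(p)}_{i}} = \Theta_{p}(\bm{y}^{(p)}_{i})$), and your final bijection $(i,m) \mapsto i + n_{p} m$ is the paper's observation that $\sum_{i} \bm{v}^{(p)}_{i} = (1,\ldots,1)$. The paper's proof is the one-line display $\Theta_{n}(\bm{x}) = \bm{y}^{(1,\ldots,1)} = \prod_{i} \bm{y}^{\bm{v}^{(p)}_{i}}$ together with Lemma~\ref{lem:13}, so you have simply inlined that lemma.
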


In addition, we give a theorem. 

\begin{thm}\label{thm:5}
Let $G$ denote the cyclic group of order $n \geq 2$ and let 
\begin{align*}
\SL(\mathbb{C}G) &:= 
\left\{ 
\begin{pmatrix} a_{0} & a_{n - 1} & \cdots & a_{1} \\ a_{1} & a_{0} & \cdots & a_{2} \\ \vdots & \vdots & \ddots & \vdots \\ a_{n - 1} & a_{n - 2} & \cdots & a_{0} \end{pmatrix} \mid a_{0}, a_{1}, \ldots, a_{n - 1} \in \mathbb{C}
\right\} \cap \SL(n, \mathbb{C}), \\ 
\mathbb{C}[\bm{x}]^{\SL(\mathbb{C}G)} &:= \left\{ f(\bm{x}) \in \mathbb{C}[\bm{x}] \mid  f(A \bm{x}) = f(\bm{x}) \: \: \text{for any} \: A \in \SL(\mathbb{C}G) \right\}. 
\end{align*}
Then it holds that 
\[
\mathbb{C}[\bm{x}]^{\SL(\mathbb{C}G)} = \mathbb{C}[\Theta_{n}(\bm{x})] \subset \mathbb{C}[\bm{x}]^{D}. 
\]
\end{thm}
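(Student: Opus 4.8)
The plan is to diagonalize the action by passing to ``Fourier coordinates''. First I would set $\lambda_{i} := \sum_{l = 0}^{n - 1} \zeta_{n}^{i l} x_{l}$ for $0 \leq i \leq n - 1$; since $(\zeta_{n}^{il})_{i,l}$ is a Vandermonde matrix in the distinct values $1, \zeta_{n}, \ldots, \zeta_{n}^{n-1}$, the assignment $x_{l} \mapsto \lambda_{l}$ extends to a $\mathbb{C}$-algebra automorphism of $\mathbb{C}[\bm{x}]$, so one may work freely in the variables $\lambda_{0}, \ldots, \lambda_{n-1}$. By Theorem~\ref{thm:4} applied with $p = 1$ (so $n_{1} = n$ and each $\Theta_{1}(\bm{y}^{(1)}_{i})$ is the $1 \times 1$ determinant $\lambda_{i}$) we get $\Theta_{n}(\bm{x}) = \prod_{i = 0}^{n-1} \lambda_{i}$.

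Next I would describe $\SL(\mathbb{C}G)$ in these coordinates. For $A \in \SL(\mathbb{C}G)$ with first column $\bm{a} = {}^{t}(a_{0}, \ldots, a_{n-1})$ one has $(A\bm{x})_{m} = \sum_{r} a_{m - r} x_{r}$ (indices mod $n$), and a short computation gives $\lambda_{i}(A\bm{x}) = \mu_{i}(A)\, \lambda_{i}(\bm{x})$ with $\mu_{i}(A) := \sum_{l = 0}^{n-1} \zeta_{n}^{i l} a_{l}$; moreover $\prod_{i} \mu_{i}(A) = \Theta_{n}(\bm{a}) = \det A = 1$, again by Theorem~\ref{thm:4}. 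Conversely, the inverse discrete Fourier transform $a_{l} = \tfrac{1}{n}\sum_{i} \zeta_{n}^{-il}\mu_{i}$ shows that \emph{every} tuple $(\mu_{0}, \ldots, \mu_{n-1}) \in (\mathbb{C}^{\times})^{n}$ with $\prod_{i} \mu_{i} = 1$ equals $(\mu_{0}(A), \ldots, \mu_{n-1}(A))$ for a (unique) $A \in \SL(\mathbb{C}G)$. Hence, in the coordinates $\lambda_{i}$, the action of $\SL(\mathbb{C}G)$ is precisely the action of the torus $T := \{ (\mu_{i}) \in (\mathbb{C}^{\times})^{n} \mid \prod_{i} \mu_{i} = 1 \}$ by coordinatewise scaling $\lambda_{i} \mapsto \mu_{i} \lambda_{i}$, and $\Theta_{n}$ corresponds to $\prod_{i} \lambda_{i}$.

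It then remains to compute $\mathbb{C}[\lambda_{0}, \ldots, \lambda_{n-1}]^{T}$. Since $T$ acts diagonally it preserves each monomial, so a polynomial is $T$-invariant iff each of its monomials is, and a monomial $\prod_{i} \lambda_{i}^{e_{i}}$ is $T$-invariant iff $\prod_{i} \mu_{i}^{e_{i}} = 1$ for all $(\mu_{i}) \in T$. Parametrizing $T$ by $(\mu_{1}, \ldots, \mu_{n-1}) \in (\mathbb{C}^{\times})^{n-1}$ with $\mu_{0} = (\mu_{1} \cdots \mu_{n-1})^{-1}$ turns this into $\prod_{i = 1}^{n-1} \mu_{i}^{e_{i} - e_{0}} = 1$ for all $\mu_{i} \in \mathbb{C}^{\times}$, which forces $e_{0} = e_{1} = \cdots = e_{n-1}$. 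So the $T$-invariant monomials are exactly the powers of $\prod_{i} \lambda_{i}$, giving $\mathbb{C}[\lambda]^{T} = \mathbb{C}[\prod_{i} \lambda_{i}]$; translating back, $\mathbb{C}[\bm{x}]^{\SL(\mathbb{C}G)} = \mathbb{C}[\Theta_{n}(\bm{x})]$. Finally, the inclusion $\mathbb{C}[\Theta_{n}(\bm{x})] \subset \mathbb{C}[\bm{x}]^{D}$ is a one-line check: $D$ is a derivation and $D\lambda_{i} = \sum_{l} \zeta_{n}^{il} x_{l-1} = \zeta_{n}^{i} \lambda_{i}$, so $D \Theta_{n} = \bigl(\sum_{i = 0}^{n-1} \zeta_{n}^{i}\bigr)\Theta_{n} = 0$ since $n \geq 2$, whence $D\bigl(g(\Theta_{n})\bigr) = g'(\Theta_{n})\, D\Theta_{n} = 0$ for every $g \in \mathbb{C}[t]$.

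The steps are all elementary; the only point needing a little care is the identification of $\SL(\mathbb{C}G)$ with the \emph{full} torus $T$ — that is, the surjectivity of $A \mapsto (\mu_{i}(A))$ onto $\{\prod_{i} \mu_{i} = 1\}$, which is where invertibility of the Fourier transform and the factorization $\Theta_{n} = \prod_{i}\lambda_{i}$ from Theorem~\ref{thm:4} are used — together with the reduction of polynomial invariance to monomial invariance. Once the action is recognized as a scaling torus action, the description of the invariant ring is forced.
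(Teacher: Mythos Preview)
Your proof is correct and follows essentially the same route as the paper: both diagonalize the circulant action via the discrete Fourier transform $y_{i} = \sum_{l} \zeta_{n}^{il} x_{l}$, identify $\SL(\mathbb{C}G)$ with the torus $\{\prod_{i} \mu_{i} = 1\}$ acting by coordinatewise scaling, and conclude that the invariant monomials are exactly the powers of $\prod_{i} y_{i} = \Theta_{n}(\bm{x})$. The only cosmetic difference is that the paper obtains the inclusion $\mathbb{C}[\Theta_{n}(\bm{x})] \subset \mathbb{C}[\bm{x}]^{D}$ by invoking Theorem~\ref{thm:4} and Lemma~\ref{lem:14} rather than by your direct computation $D\Theta_{n} = \bigl(\sum_{i} \zeta_{n}^{i}\bigr)\Theta_{n} = 0$.
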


From Theorems~$\ref{thm:1}$ and $\ref{thm:5}$, 
we can immediately obtain the following corollary. 

\begin{cor}
For a group $G$ of prime order, 
it holds that 
$\mathbb{C}[\bm{x}]^{\SL(\mathbb{C}G)} = \mathbb{C}[\bm{x}]^{D} = \mathbb{C}[\bm{x}]^{\Delta}$. 
\end{cor}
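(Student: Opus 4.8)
The corollary follows by combining two facts: Theorem~\ref{thm:1} says that when $n = p$ is prime, $\mathbb{C}[\bm{x}]^{D} = \mathbb{C}[\bm{x}]^{\Delta} = \mathbb{C}[\Theta_{p}(\bm{y}^{(p)}_{i}) \mid 0 \leq i \leq n_{p} - 1]$; and Theorem~\ref{thm:5} says that $\mathbb{C}[\bm{x}]^{\SL(\mathbb{C}G)} = \mathbb{C}[\Theta_{n}(\bm{x})]$. So the plan is to show that these two polynomial rings coincide when $n$ is prime.

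\begin{proof}
Let $G$ be a group of prime order $p$, so $n = p$. Then $n_{p} = n/p = 1$, so the only index in Theorem~\ref{thm:1} is $i = 0$, and
\[
y^{(p)}_{0,j} = \sum_{l=0}^{0} \zeta_{n}^{0} x_{j} = x_{j} \qquad (0 \leq j \leq p-1).
\]
Hence $\Theta_{p}(\bm{y}^{(p)}_{0}) = \Theta_{p}(\bm{x}) = \Theta_{n}(\bm{x})$, and Theorem~\ref{thm:1} gives
\[
\mathbb{C}[\bm{x}]^{D} = \mathbb{C}[\bm{x}]^{\Delta} = \mathbb{C}[\Theta_{n}(\bm{x})].
\]
On the other hand, Theorem~\ref{thm:5} gives $\mathbb{C}[\bm{x}]^{\SL(\mathbb{C}G)} = \mathbb{C}[\Theta_{n}(\bm{x})]$. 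Combining these,
\[
\mathbb{C}[\bm{x}]^{\SL(\mathbb{C}G)} = \mathbb{C}[\Theta_{n}(\bm{x})] = \mathbb{C}[\bm{x}]^{D} = \mathbb{C}[\bm{x}]^{\Delta},
\]
as claimed.
\end{proof}

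There is essentially no obstacle here: the corollary is a direct specialization, and the only thing to check carefully is that the generating set in Theorem~\ref{thm:1} collapses to the single element $\Theta_{n}(\bm{x})$ when $n$ is prime, which follows because $n_{p} = 1$ forces $\bm{y}^{(p)}_{0} = \bm{x}$. One could alternatively invoke Theorem~\ref{thm:4} with $p = n$ to see $\Theta_{n}(\bm{x}) = \prod_{i=0}^{0}\Theta_{n}(\bm{y}^{(n)}_{i}) = \Theta_{n}(\bm{x})$, but that is the same trivial observation.
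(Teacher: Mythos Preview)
Your proof is correct and matches the paper's approach exactly: the paper states that the corollary follows immediately from Theorems~\ref{thm:1} and~\ref{thm:5}, and you have simply written out the one-line verification that when $n=p$ is prime the generating set in Theorem~\ref{thm:1} reduces to $\Theta_n(\bm{x})$.
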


\section{Preliminaries}
\subsection{Variable transformation}
We consider the variable vector $\bm{x} := {}^{t} (x_{0}, x_{1}, \ldots, x_{n - 1})$ and perform a linear variable transformation to a  new variable vector $\bm{y} := {}^{t}(y_{0}, y_{1}, \ldots, y_{n-1})$ defined as 
\[
y_{i} := \sum_{j = 0}^{n - 1} \zeta_{n}^{i j} x_{j} \quad (0 \leq i \leq n - 1), 
\]
which are the eigenvalues of the $n$-dimensional circulant matrix composed of the vector~$\bm{x}$. 
Since 
\begin{align*}
D(y_{i}) &= \sum_{j = 0}^{n - 1} \zeta_{n}^{i j} D(x_{j}) = \sum_{j = 0}^{n - 1} \zeta_{n}^{i j} x_{j-1} = \sum_{j = 0}^{n - 1} \zeta_{n}^{i (j + 1)} x_{j} = \zeta_{n}^{i} y_{i}, \\ 
\Delta(y_{i}) &= \sum_{j = 0}^{n - 1} \zeta_{n}^{i j} \Delta(x_{j}) = \sum_{j = 0}^{n - 1} \zeta_{n}^{i j} x_{j+1} = \sum_{j = 0}^{n - 1} \zeta_{n}^{i (j - 1)} x_{j} = \zeta_{n}^{- i} y_{i}
\end{align*}
hold (These equations are special cases of \cite[p.~227, (8)]{MR0446837}), 
we can express the differential operators as follows:
\begin{align*}
D = \sum_{i = 0}^{n - 1} \zeta_{n}^{i} y_{i} \frac{\partial}{\partial y_{i}}, \quad 
\Delta = \sum_{i = 0}^{n - 1} \zeta_{n}^{- i} y_{i} \frac{\partial}{\partial y_{i}}. 
\end{align*}
For an $n$-tuple $\bm{\alpha} := (\alpha_{0}, \alpha_{1}, \ldots, \alpha_{n - 1})$ of non-negative integers, 
we define 
\[
\bm{y}^{\bm{\alpha}} := y_{0}^{\alpha_{0}} y_{1}^{\alpha_{1}} \cdots y_{n - 1}^{\alpha_{n - 1}}. 
\]
The differential operators $D$ and $\Delta$ act on the monomial $\bm{y}^{\bm{\alpha}}$ by scalar multiplication with the scalar multipliers $\sum_{i = 0}^{n - 1} \alpha_{i} \zeta_{n}^{i}$ and $\sum_{i = 0}^{n - 1} \alpha_{i} \zeta_{n}^{- i}$, respectively, which are complex conjugates of each other. 
Therefore, the invariant rings $\mathbb{C}[\bm{y}]^{D}$ and $\mathbb{C}[\bm{y}]^{\Delta}$ can be regarded as the vector spaces over $\mathbb{C}$ generated by the monomials $\bm{y}^{\bm{\alpha}}$ corresponding to tuples $\bm{\alpha}$ for which the scalar multipliers are zero. 
Thus, we have 
\[
\mathbb{C}[\bm{y}]^{D} = \mathbb{C}[\bm{y}]^{\Delta} = \mathbb{C}\mathchar`-\Span\{ \bm{y}^{\bm{\alpha}} \mid \bm{\alpha} \in V_{n} \cap (\mathbb{Z}_{\geq 0})^{n} \}, 
\]
where $V_{n}$ is the vector space over $\mathbb{Q}$ defined as 
\[
V_{n} := \{ (\alpha_{0}, \alpha_{1}, \ldots, \alpha_{n - 1}) \in \mathbb{Q}^{n} \mid \sum_{i = 0}^{n - 1} \alpha_{i} \zeta_{n}^{i} = 0 \}. 
\]

\subsection{Exponent vectors}
Let $\bm{e}_{0}, \bm{e}_{1}, \ldots, \bm{e}_{n - 1}$ be the standard basis of the vector space $\mathbb{Q}^{n}$. 

\begin{lem}\label{lem:7}
The dimension of the vector space $V_{n}$ over $\mathbb{Q}$ is $n - \varphi(n)$, 
where $\varphi$ denotes the Euler's totient function. 
\end{lem}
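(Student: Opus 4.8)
The plan is to identify $V_{n}$ with the kernel of a single explicit $\mathbb{Q}$-linear map and then apply the rank--nullity theorem. Let $\phi \colon \mathbb{Q}^{n} \to \mathbb{Q}(\zeta_{n})$ be the $\mathbb{Q}$-linear map determined by $\phi(\bm{e}_{i}) = \zeta_{n}^{i}$ for $0 \leq i \leq n - 1$; concretely, $\phi$ sends $(\alpha_{0}, \alpha_{1}, \ldots, \alpha_{n-1})$ to $\sum_{i=0}^{n-1} \alpha_{i} \zeta_{n}^{i}$. By the very definition of $V_{n}$ we have $V_{n} = \Ker(\phi)$, so $\dim_{\mathbb{Q}} V_{n} = n - \dim_{\mathbb{Q}} \Ima(\phi)$.

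It then remains to show $\dim_{\mathbb{Q}} \Ima(\phi) = \varphi(n)$. The image is the $\mathbb{Q}$-span of $\{1, \zeta_{n}, \ldots, \zeta_{n}^{n-1}\}$, so it is contained in $\mathbb{Q}(\zeta_{n})$ and therefore has dimension at most $[\mathbb{Q}(\zeta_{n}) : \mathbb{Q}] = \varphi(n)$, the degree of the $n$-th cyclotomic polynomial $\Phi_{n}$, which I would cite as the minimal polynomial of $\zeta_{n}$ over $\mathbb{Q}$. Conversely, $1, \zeta_{n}, \ldots, \zeta_{n}^{\varphi(n)-1}$ are $\mathbb{Q}$-linearly independent and all lie in $\Ima(\phi)$ since $\varphi(n) \leq n$, so $\dim_{\mathbb{Q}} \Ima(\phi) \geq \varphi(n)$. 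Hence $\dim_{\mathbb{Q}} \Ima(\phi) = \varphi(n)$, equivalently $\phi$ is surjective, and we conclude $\dim_{\mathbb{Q}} V_{n} = n - \varphi(n)$.

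There is no real obstacle here; the only nontrivial input is the classical fact that $\Phi_{n}$ is irreducible over $\mathbb{Q}$ of degree $\varphi(n)$, which I would invoke rather than reprove. As an alternative that makes the count even more transparent, one may identify $\bm{\alpha} \in \mathbb{Q}^{n}$ with the polynomial $\sum_{i=0}^{n-1} \alpha_{i} x^{i}$ of degree $< n$; then $\bm{\alpha} \in V_{n}$ if and only if this polynomial vanishes at $\zeta_{n}$, if and only if it is divisible by $\Phi_{n}$, if and only if it has the form $\Phi_{n}(x) g(x)$ with $\deg g \leq n - 1 - \varphi(n)$. The map $g \mapsto \Phi_{n} g$ is then a $\mathbb{Q}$-linear isomorphism from the $(n - \varphi(n))$-dimensional space of such $g$ onto $V_{n}$, giving the claim directly.
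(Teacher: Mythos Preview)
Your proof is correct, and in fact your alternative formulation is precisely the paper's own argument: the paper defines $\bm{v}_{i} := \sum_{j=0}^{\varphi(n)} b_{j}\,\bm{e}_{i+j}$ (the coefficient vector of $x^{i}\Phi_{n}(x)$) for $0 \le i \le n-\varphi(n)-1$ and checks directly that $S_{n} = \{\bm{v}_{i}\}$ is a basis of $V_{n}$, using exactly the divisibility-by-$\Phi_{n}$ reasoning you sketch. Your primary approach via rank--nullity is a slightly slicker repackaging of the same content, trading the explicit basis for the observation that $\phi$ is surjective onto $\mathbb{Q}(\zeta_{n})$. The only thing the paper's version buys is the explicit basis $S_{n}$, which it immediately remarks is not well suited to the non-negative cone (since cyclotomic coefficients can be negative) and replaces with the vectors $\bm{v}^{(p)}_{i}$; so nothing is lost by your streamlined argument.
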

\begin{proof}
When $n = 1$, the statement of the lemma is true since the dimension of $V_{1} = \{ \bm{0} \}$ is zero. 
Suppose that $n \geq 2$ and let $\sum_{j = 0}^{\varphi(n)} b_{j} x^{j} \in \mathbb{Z}[x]$ be the $n$-th cyclotomic polynomial. 
We define 
\[
\bm{v}_{i} := \sum_{j = 0}^{\varphi(n)} b_{j} \bm{e}_{i + j} \quad (0 \leq i \leq n - \varphi(n) - 1)
\]
so that $\bm{v}_{i} \in V_{n}$ holds for each $i$. 
Then, the set 
\[
S_{n} := \{ \bm{v}_{i} \mid 0 \leq i \leq n - \varphi(n) - 1 \}
\]
forms a basis of $V_{n}$. 
This can be verified as follows: 
(i) If $\sum_{i = 0}^{n - \varphi(n) - 1} c_{i} \bm{v}_{i} = \bm{0}$ for some $c_{i} \in \mathbb{Q}$, then we have $c_{0} = c_{1} = \cdots = c_{n - \varphi(n) - 1} = 0$ from 
\[
\sum_{i = 0}^{n - \varphi(n) - 1} c_{i} \bm{v}_{i} 
= \sum_{i = 0}^{n - \varphi(n) - 1} c_{i} \sum_{j = 0}^{\varphi(n)} b_{j} \bm{e}_{i + j} 
= \sum_{i = 0}^{n - 1} \sum_{j = \max{\{i - (n - \varphi(n) - 1),\, 0 \}}}^{\min{\{i,\, \varphi(n)\}}} b_{j} c_{i - j} \bm{e}_{i}
\]
and $b_{0} \neq 0$. 
Thus, $S_{n}$ is linearly independent. 
(ii) Since the cyclotomic polynomial is the minimal polynomial of $\zeta_{n}$ over $\mathbb{Q}$, 
for any $\bm{\alpha} = (\alpha_{0}, \alpha_{1}, \ldots, \alpha_{n-1}) \in V_{n}$, there exists $\sum_{i = 0}^{n - \varphi(n) - 1} c_{i} x^{i} \in \mathbb{Q}[x]$ satisfying 
\[
\sum_{i = 0}^{n - 1} \alpha_{i} x^{i} = \sum_{j = 0}^{\varphi(n)} b_{j} x^{j} \sum_{i = 0}^{n - \varphi(n) - 1} c_{i} x^{i} = \sum_{i = 0}^{n - 1} \sum_{j = \max{\{i - (n - \varphi(n) - 1),\, 0 \}}}^{\min{\{i,\, \varphi(n)\}}} b_{j} c_{i - j} x^{i}. 
\]
By comparing the coefficients, we have $\alpha_{i} = \sum_{j = \max{\{i - (n - \varphi(n) - 1),\, 0 \}}}^{\min{\{i,\, \varphi(n)\}}} b_{j} c_{i - j}$, and therefore  
\[
\sum_{i = 0}^{n - \varphi(n) - 1} c_{i} \bm{v_{i}} 
= \sum_{i = 0}^{n - 1} \sum_{j = \max{\{i - (n - \varphi(n) - 1),\, 0 \}}}^{\min{\{i,\, \varphi(n)\}}} b_{j} c_{i - j} \bm{e}_{i} 
= \sum_{i = 0}^{n - 1} \alpha_{i} \bm{e}_{i} 
= \bm{\alpha}. 
\]
This means that $S_{n}$ spans $V_{n}$. 
\end{proof}

Since the coefficients of the cyclotomic polynomial can be negative, 
the set $S_{n}$ is not necessarily suitable for describing the non-negative part of $V_{n}$. 
Now, for each positive divisor $p$ of $n$, 
we define 
\[
{\bm v}^{(p)}_{i} := \sum_{j = 0}^{p - 1} \bm{e}_{i + n_{p} j} \in (\mathbb{Z}_{\geq 0})^{n} \quad (0 \leq i \leq n_{p} - 1).  
\]

\begin{lem}\label{lem:8}
For any divisor $p \geq 2$ of $n$, 
we have ${\bm v}^{(p)}_{i} \in V_{n} \cap (\mathbb{Z}_{\geq 0})^{n}$ for $0 \leq i \leq n_{p} - 1$. 
\end{lem}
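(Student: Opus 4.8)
The plan is to verify directly that each vector $\bm{v}^{(p)}_{i}$ lies in $V_{n}$, i.e.\ that the associated linear combination of powers of $\zeta_{n}$ vanishes; membership in $(\mathbb{Z}_{\geq 0})^{n}$ is immediate from the definition, since $\bm{v}^{(p)}_{i}$ is a sum of distinct standard basis vectors. So the whole content is the identity $\sum_{j=0}^{p-1} \zeta_{n}^{i + n_{p} j} = 0$ for each $i$ with $0 \leq i \leq n_{p} - 1$.

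First I would factor out $\zeta_{n}^{i}$ to reduce to showing $\sum_{j=0}^{p-1} \zeta_{n}^{n_{p} j} = 0$. Then I would observe that $\zeta_{n}^{n_{p}}$ is a primitive $p$-th root of unity, because $n_{p} = n/p$ and so $(\zeta_{n}^{n_{p}})^{p} = \zeta_{n}^{n} = 1$ while no smaller positive power gives $1$ (if $(\zeta_{n}^{n_{p}})^{k} = 1$ then $n \mid n_{p} k$, i.e.\ $p \mid k$). Writing $\xi := \zeta_{n}^{n_{p}}$, the sum becomes $\sum_{j=0}^{p-1} \xi^{j} = \frac{\xi^{p} - 1}{\xi - 1} = 0$, where we use $p \geq 2$ so that $\xi \neq 1$ and the denominator is nonzero. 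This is the standard fact that the sum of all $p$-th roots of unity vanishes.

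The argument is entirely routine, so there is no real obstacle; the only point that deserves a line of care is the hypothesis $p \geq 2$, which is exactly what is needed to guarantee $\xi \neq 1$ (for $p = 1$ the vector $\bm{v}^{(1)}_{i} = \bm{e}_{i}$ is certainly not in $V_{n}$, and indeed $V_{1} = \{\bm{0}\}$ while the sum is the empty-free single term $1 \neq 0$). I would state the proof in two or three sentences along these lines.

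\begin{proof}
Since $\bm{v}^{(p)}_{i} = \sum_{j=0}^{p-1} \bm{e}_{i + n_{p} j}$ is a sum of standard basis vectors, it clearly lies in $(\mathbb{Z}_{\geq 0})^{n}$. It remains to show $\bm{v}^{(p)}_{i} \in V_{n}$, i.e.\ $\sum_{j=0}^{p-1} \zeta_{n}^{i + n_{p} j} = 0$. Put $\xi := \zeta_{n}^{n_{p}}$. Then $\xi^{p} = \zeta_{n}^{n} = 1$, and $\xi \neq 1$ because $p \geq 2$ and $\zeta_{n}$ is a primitive $n$-th root of unity. Hence
\[
\sum_{j=0}^{p-1} \zeta_{n}^{i + n_{p} j} = \zeta_{n}^{i} \sum_{j=0}^{p-1} \xi^{j} = \zeta_{n}^{i} \cdot \frac{\xi^{p} - 1}{\xi - 1} = 0,
\]
which proves $\bm{v}^{(p)}_{i} \in V_{n}$.
\end{proof}
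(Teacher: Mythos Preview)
Your proof is correct and essentially identical to the paper's: both factor out $\zeta_{n}^{i}$ and use that $\zeta_{n}^{n_{p}}$ is a (primitive) $p$-th root of unity so that $\sum_{j=0}^{p-1}(\zeta_{n}^{n_{p}})^{j}=0$. The only cosmetic difference is that the paper writes this sum as $\sum_{j=0}^{p-1}\zeta_{p}^{j}=0$ directly, while you spell out the geometric-series computation; the content is the same.
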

\begin{proof}
It follows from 
$\sum_{j = 0}^{p - 1} \zeta_{n}^{i + n_{p} j} = \zeta_{n}^{i} \sum_{j = 0}^{p - 1} \zeta_{n}^{n_{p} j} = \zeta_{n}^{i} \sum_{j = 0}^{p - 1} \zeta_{p}^{j} = 0$ 
that ${\bm v}^{(p)}_{i} \in V_{n}$.  
\end{proof}

We define the set $T_{n}$ as follows: 
\[
T_{n} := \bigcup_{\substack{p \mid n \\ p: \: \text{prime}}} \{ {\bm v}^{(p)}_{i} \mid 0 \leq i \leq n_{p} - 1 \}.  
\]
From Lemma~$\ref{lem:8}$, 
we have 
\[  
V_{n} \cap (\mathbb{Z}_{\geq 0})^{n} \supset 
\mathbb{Z}_{\geq 0}\mathchar`-\Span T_{n}. 
\]
When $n$ has at most two prime factors, 
the reverse inclusion also holds as shown in Section~$3.1$. 

\begin{lem}\label{lem:9}
Suppose that $n$ has at least three prime factors. Then, it holds that 
\[
V_{n} \cap (\mathbb{Z}_{\geq 0})^{n} \supsetneq 
\mathbb{Z}_{\geq 0}\mathchar`-\Span T_{n}. 
\]
\end{lem}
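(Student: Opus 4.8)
The plan is to produce, for each $n$ with at least three prime factors, an explicit vector in $V_{n} \cap (\mathbb{Z}_{\geq 0})^{n}$ that is not a non-negative integer combination of the polygon vectors $\bm{v}^{(p)}_{i}$; by Lemma~\ref{lem:8} we already have $\mathbb{Z}_{\geq 0}\text{-}\Span T_{n} \subseteq V_{n} \cap (\mathbb{Z}_{\geq 0})^{n}$, so this suffices. The first step is a reduction to the case $n = pqr$ with $p, q, r$ three distinct primes dividing $n$. Fix such primes, set $N := pqr$ and $d := n/N$, and consider the $\mathbb{Q}$-linear injection $\iota \colon \mathbb{Q}^{N} \hookrightarrow \mathbb{Q}^{n}$ sending the $j$-th standard basis vector to $\bm{e}_{dj}$, i.e.\ spreading an $N$-vector over the coordinates indexed by the multiples of $d$ in $\mathbb{Z}/n\mathbb{Z}$. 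Since $\zeta_{n}^{d}$ is a primitive $N$-th root of unity, $\iota$ maps $V_{N} \cap (\mathbb{Z}_{\geq 0})^{N}$ into $V_{n} \cap (\mathbb{Z}_{\geq 0})^{n}$; the substance of the reduction is the converse: a non-negative combination $\sum c_{s,i}\,\bm{v}^{(s)}_{i}$ equal to $\iota(\bm{\beta})$ can only involve polygons supported on the subgroup $\langle d \rangle \leq \mathbb{Z}/n\mathbb{Z}$ of order $N$ (all coefficients being non-negative), and comparing the orders of the cyclic subgroups $\langle n/s \rangle$ and $\langle d \rangle$ forces $s \mid N$ and $d \mid i$, so that such a polygon is exactly $d$ times a polygon of $\mathbb{Z}/N\mathbb{Z}$; hence $\bm{\beta} \in \mathbb{Z}_{\geq 0}\text{-}\Span T_{N}$, and a counterexample for $N$ pulls back to one for $n$.

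For $N = pqr$ I would write $\mathbb{Z}/N\mathbb{Z} = \langle N/r \rangle \oplus \langle r \rangle$, an internal direct sum of cyclic subgroups of the coprime orders $r$ and $pq$, put $\omega := \zeta_{N}^{r}$ (a primitive $pq$-th root of unity, so $\sum_{\gcd(k,pq)=1} \omega^{k} = \mu(pq) = 1$), and take
\[
A := \bigl( \langle N/r \rangle \setminus \{0\} \bigr) \cup \{\, rk \bmod N \mid 0 \le k < pq,\ \gcd(k,pq) = 1 \,\} \subseteq \mathbb{Z}/N\mathbb{Z},
\]
which is a genuine set, because its two pieces lie respectively in $\langle N/r \rangle$ and $\langle r \rangle$ — subgroups meeting only in $0$ — and $0$ lies in neither. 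Setting $\bm{\alpha} := \sum_{t \in A} \bm{e}_{t}$, one computes $\sum_{t \in A} \zeta_{N}^{t} = \sum_{j=1}^{r-1} \zeta_{r}^{j} + \sum_{\gcd(k,pq)=1} \omega^{k} = (-1) + 1 = 0$, so $\bm{\alpha} \in V_{N} \cap (\mathbb{Z}_{\geq 0})^{N}$.

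The crux is showing $\bm{\alpha} \notin \mathbb{Z}_{\geq 0}\text{-}\Span T_{N}$. I would use the partition of $\mathbb{Z}/N\mathbb{Z}$ into the $r$ cosets of $\langle r \rangle$. Every $p$-gon and every $q$-gon lies inside a single such coset (its underlying subgroup $\langle N/p \rangle$, resp.\ $\langle N/q \rangle$, is contained in $\langle r \rangle$), while every $r$-gon is a coset of $\langle N/r \rangle$ and hence meets each of these cosets in exactly one point. By construction $A$ meets the coset $\langle r \rangle$ in $\varphi(pq) = (p-1)(q-1)$ points and each of the remaining $r-1$ cosets in exactly one point. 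If $\bm{\alpha} = \sum c_{s,i}\,\bm{v}^{(s)}_{i}$ with $c_{s,i} \in \mathbb{Z}_{\geq 0}$, then in each coset other than $\langle r \rangle$ the single point of $A$ must equal the sum of $c := \sum_{i} c_{r,i}$ copies of $r$-gons (the same number $c$ for every coset, since each $r$-gon contributes one point to each coset) and a non-negative combination of $p$- and $q$-gons confined to that coset, whose total weight there is $0$ or at least $2$; this forces $c = 1$ and no $p$- or $q$-gons outside $\langle r \rangle$. But then the unique $r$-gon used must pass through the prescribed point of each of those $r-1$ cosets, and these are exactly the nonzero elements of $\langle N/r \rangle$, so that $r$-gon is $\langle N/r \rangle$ itself and therefore contains $0$ — contradicting $0 \notin A$.

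I expect this last non-decomposability step to be the only real obstacle; the embedding $\iota$, the vanishing-sum computation, and the classical identity $\sum_{\gcd(k,pq)=1} \zeta_{pq}^{k} = \mu(pq) = 1$ are routine. The design point is precisely that $A$ "misses" $0$: the remaining mass $\{\, rk \mid \gcd(k,pq)=1 \,\}$ in the coset $\langle r \rangle$ sums to $+1$ rather than to $0$, which is exactly what blocks using the $r$-gon $\langle N/r \rangle$ to complete $\langle N/r \rangle \setminus \{0\}$. As a check, for $n = 30$ (so $\{p,q,r\} = \{2,3,5\}$) the construction yields $A = \{5, 6, 12, 18, 24, 25\}$, recovering a classical sporadic vanishing sum of $30$th roots of unity.
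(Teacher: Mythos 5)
Your proposal is correct, but it reaches the conclusion by a genuinely different route from the paper. The paper needs no reduction to the squarefree case: it writes down, directly in $\mathbb{Z}/n\mathbb{Z}$, the single vector $\bm{\alpha}' = \sum_{j = 2}^{p} {\bm v}^{(q)}_{n_{p} j} + {\bm v}^{(r)}_{n_p + n_q} - {\bm v}^{(p)}_{n_q}$ (an integer, not non-negative, combination of polygon vectors, hence automatically in $V_n$), checks it is non-negative with $\alpha'_0 = 1$, and uses a one-line obstruction: any element of $\mathbb{Z}_{\geq 0}\mathchar`-\Span T_{n}$ with a positive entry at position $0$ must use some prime-gon through $0$, necessarily $\bm{v}^{(s)}_0$, which forces a positive entry at position $n_s$ — but $\bm{\alpha}'$ vanishes at every $n_{p_i}$. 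Your argument instead (i) reduces to $N = pqr$ via the embedding $\iota$ onto the subgroup $\langle d\rangle$ (this step is sound: non-negativity confines every contributing polygon to $\langle d\rangle$, which forces $s \mid N$ and $d \mid i$), (ii) builds a $0$–$1$ witness from the vanishing sum $\sum_{j=1}^{r-1}\zeta_r^j + \sum_{\gcd(k,pq)=1}\zeta_{pq}^{k} = -1 + \mu(pq) = 0$, and (iii) rules out a non-negative decomposition by counting mass on the $r$ cosets of $\langle r\rangle$: the forced count $c=1$ of $r$-gons pins the unique $r$-gon to $\langle N/r\rangle$ itself, which passes through the excluded point $0$. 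All three steps check out (I verified the $n=30$ instance $A=\{5,6,12,18,24,25\}$). What your approach buys is a conceptual link to the classical theory of minimal vanishing sums of roots of unity and a witness that is a genuine $0$–$1$ vector; what it costs is the extra reduction step and a more elaborate obstruction argument than the paper's "no prime-gon through $0$ avoids the points $n_{p_i}$" observation.
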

\begin{proof}
Let $n = p_{1}^{l_1} p_{2}^{l_2} \cdots p_{k}^{l_{k}}$ be the prime factorization of $n$, where $k \geq 3$ and $l_j \geq 1$. 
Note that the following holds: If $\bm{\alpha} := (\alpha_{0}, \alpha_{1}, \ldots, \alpha_{n - 1}) \in \mathbb{Z}_{\geq 0}\mathchar`-\Span T_{n}$ with $\alpha_{0} \geq 1$, then at least one of $\alpha_{n_{p_1}}$, $\alpha_{n_{p_2}}$, \ldots, $\alpha_{n_{p_k}}$ is not zero. 
For any positive divisor $d$ of $n$, we define ${\bm v}^{(d)}_{i + n_{d} j} := {\bm v}^{(d)}_{i}$ for $0 \leq i \leq n_{d} - 1$ and $j \in \mathbb{Z}$. 
Now, take any $p, q, r \in \{p_{1}, p_{2}, \ldots, p_{k} \}$ with $p < q < r$, and let $\bm{\alpha}' = (\alpha'_{0}, \alpha'_{1}, \ldots, \alpha'_{n - 1}) := \sum_{j = 2}^{p} {\bm v}^{(q)}_{n_{p} j} + {\bm v}^{(r)}_{n_p + n_q} - {\bm v}^{(p)}_{n_q}$. 
Then, since $\alpha'_{0} = 1$ and $\alpha'_{n_{p_1}} = \alpha'_{n_{p_2}} = \cdots = \alpha'_{n_{p_k}} = 0$ hold, we have $\bm{\alpha}' \notin \mathbb{Z}_{\geq 0}\mathchar`-\Span T_{n}$. 
On the other hand, we have $\bm{\alpha}' \in V_{n} \cap (\mathbb{Z}_{\geq 0})^{n}$ since $\bm{\alpha}' \in V_{n}$ holds from Lemma~$\ref{lem:8}$ and $\bm{\alpha}' \in (\mathbb{Z}_{\geq 0})^{n}$ holds from the definition. 
Thus the lemma is proved. 
\end{proof}

\begin{lem}\label{lem:10}
For any positive integers $p$ and $q$ satisfying $p q \mid n$, 
it holds that 
\[
\sum_{j = 0}^{q - 1} {\bm v}^{(p)}_{i + n_{pq} j} 
= \sum_{j = 0}^{p - 1} {\bm v}^{(q)}_{i + n_{pq} j}
\]
for $0 \leq i \leq n_{p q} - 1$. 
\end{lem}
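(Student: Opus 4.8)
The plan is to prove the identity by comparing, coordinate by coordinate, the two sides as elements of $(\mathbb{Z}_{\geq 0})^{n}$. Recall that ${\bm v}^{(p)}_{i} = \sum_{l=0}^{p-1} \bm{e}_{i + n_{p} l}$, so the left-hand side is $\sum_{j=0}^{q-1}\sum_{l=0}^{p-1} \bm{e}_{i + n_{pq} j + n_{p} l}$ and the right-hand side is $\sum_{j=0}^{p-1}\sum_{l=0}^{q-1} \bm{e}_{i + n_{pq} j + n_{q} l}$, where all subscripts on $\bm{e}$ are read modulo $n$ (this is consistent with the convention ${\bm v}^{(d)}_{i + n_{d} j} := {\bm v}^{(d)}_{i}$ introduced just before Lemma~\ref{lem:9}, together with $\bm{e}_{m} := \bm{e}_{m \bmod n}$). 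Both sides are thus sums of $pq$ standard basis vectors, so it suffices to show that the index sets
\[
A := \{\, i + n_{pq} j + n_{p} l \bmod n \mid 0 \leq j \leq q-1,\ 0 \leq l \leq p-1 \,\}, \quad
B := \{\, i + n_{pq} j + n_{q} l \bmod n \mid 0 \leq j \leq p-1,\ 0 \leq l \leq q-1 \,\}
\]
coincide as multisets.

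The key observation is that $n_{p} = n_{pq} q$ and $n_{q} = n_{pq} p$, so after subtracting $i$ and dividing by $n_{pq}$, the claim becomes: the multisets $\{\, j + q l \bmod pq \mid 0 \leq j \leq q-1,\ 0 \leq l \leq p-1 \,\}$ and $\{\, j + p l \bmod pq \mid 0 \leq j \leq p-1,\ 0 \leq l \leq q-1 \,\}$ are both equal to $\{0, 1, \ldots, pq-1\}$ (each element with multiplicity one). First I would verify that $j + q l$, as $(j,l)$ ranges over $\{0,\ldots,q-1\}\times\{0,\ldots,p-1\}$, hits every residue mod $pq$ exactly once: if $j + q l \equiv j' + q l' \pmod{pq}$ then reducing mod $q$ gives $j \equiv j' \pmod q$, hence $j = j'$ since both lie in $\{0,\ldots,q-1\}$, and then $q l \equiv q l' \pmod{pq}$ forces $l \equiv l' \pmod p$, hence $l = l'$; since the domain has $pq$ elements, the map is a bijection onto $\mathbb{Z}/pq\mathbb{Z}$. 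The same argument with the roles of $p$ and $q$ swapped handles the other multiset. Therefore $A$ and $B$ both equal $\{\, i + n_{pq} m \bmod n \mid 0 \leq m \leq pq - 1 \,\}$, and the two sides of the identity are equal.

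One should note that coprimality of $p$ and $q$ is \emph{not} actually needed for this combinatorial identity — the bijection arguments above use only that $j$ ranges over a complete residue system mod $q$ (resp. mod $p$) and that the leftover factor is invertible on the relevant quotient — so the lemma holds as stated for all $p, q$ with $pq \mid n$; I would simply remark this or silently use it. The only mild subtlety, and the one place to be careful, is the bookkeeping of the index conventions: one must confirm that reducing the subscripts of $\bm{e}$ modulo $n$ is exactly what the convention ${\bm v}^{(d)}_{i + n_{d} j} := {\bm v}^{(d)}_{i}$ encodes, i.e. that shifting the starting index of a ${\bm v}^{(d)}$-vector by a multiple of $n_{d}$ is the same as cyclically permuting its support by a multiple of $n_{d}$, which is immediate from the definition ${\bm v}^{(d)}_{i} = \sum_{l=0}^{d-1}\bm{e}_{i+n_d l}$ once one reads $\bm{e}$-subscripts mod $n$. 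With that alignment in place, the proof is just the two one-line bijection checks above.
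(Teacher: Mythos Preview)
Your proposal is correct and follows essentially the same route as the paper: both arguments expand ${\bm v}^{(p)}_{i+n_{pq}j}$ via its definition, use $n_p = n_{pq}q$, and observe that $(j,l)\mapsto j+ql$ bijects $\{0,\ldots,q-1\}\times\{0,\ldots,p-1\}$ onto $\{0,\ldots,pq-1\}$, whence both sides equal $\sum_{m=0}^{pq-1}\bm{e}_{i+n_{pq}m}$. One small simplification: your mod-$n$ bookkeeping is unnecessary, since for $0\le i\le n_{pq}-1$ the indices $i+n_{pq}j$ (with $0\le j\le q-1$) already lie in $\{0,\ldots,n_p-1\}$ and the indices $i+n_{pq}m$ (with $0\le m\le pq-1$) already lie in $\{0,\ldots,n-1\}$, so no cyclic convention on $\bm{e}$ needs to be invoked.
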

\begin{proof}
For any $0 \leq i \leq n_{pq} - 1$, we have 
\begin{align*}
\sum_{j = 0}^{q - 1} {\bm v}^{(p)}_{i + n_{pq} j} 
= \sum_{j = 0}^{q - 1} \sum_{k = 0}^{p - 1} \bm{e}_{i + n_{pq} (j + q k)} 
= \sum_{j = 0}^{p q - 1} \bm{e}_{i + n_{p q} j}. 
\end{align*}
Since the last expression is symmetric in $p$ and $q$, 
we obtain the lemma. 
\end{proof}

The following lemma implies that, when positive integers $p$ and $q$ satisfying $p q \mid n$ are coprime, all the linear relations among ${\bm v}^{(p)}_{i}$ ($0 \leq i \leq n_{p} - 1$) and ${\bm v}^{(q)}_{i}$ ($0 \leq i \leq n_{q} - 1$) can be derived from the relations in Lemma~$\ref{lem:10}$. 

\begin{lem}\label{lem:11}
Let $p$ and $q$ be coprime positive integers satisfying $p q \mid n$. 
If 
\[
\sum_{i = 0}^{n_{p q} - 1} \sum_{j = 0}^{q - 1} c_{i, j} {\bm v}_{i + n_{p q} j}^{(p)} = \sum_{i = 0}^{n_{p q} - 1} \sum_{j = 0}^{p - 1} d_{i, j} {\bm v}_{i + n_{p q} j}^{(q)}
\]
for some $c_{i, j}, d_{i, j} \in \mathbb{Q}$, then $c_{i, 0} = c_{i, 1} = \cdots = c_{i, q -1} = d_{i, 0} = d_{i, 1} = \cdots = d_{i, p - 1}$ holds for $0 \leq i \leq n_{p q} - 1$. 
\end{lem}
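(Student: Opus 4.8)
The plan is to compare the coefficient of each standard basis vector $\bm{e}_{m}$ on the two sides of the hypothesized identity, and then to invoke the Chinese Remainder Theorem.

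First I would record the indexing. Because $n_{p} = n_{pq} q$ and $n_{q} = n_{pq} p$, as $i$ runs over $\{0, \ldots, n_{pq} - 1\}$ and $j$ over $\{0, \ldots, q - 1\}$ (resp.\ over $\{0, \ldots, p - 1\}$), the index $i + n_{pq} j$ runs exactly once through $\{0, \ldots, n_{p} - 1\}$ (resp.\ $\{0, \ldots, n_{q} - 1\}$); so the left- and right-hand sides are honest $\mathbb{Q}$-linear combinations of the ${\bm v}^{(p)}_{k}$ ($0 \le k \le n_{p} - 1$) and of the ${\bm v}^{(q)}_{l}$ ($0 \le l \le n_{q} - 1$), respectively.

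Next, from ${\bm v}^{(p)}_{k} = \sum_{l = 0}^{p - 1} \bm{e}_{k + n_{p} l}$ one sees that $\bm{e}_{m}$ occurs in ${\bm v}^{(p)}_{k}$ precisely when $k \equiv m \pmod{n_{p}}$; hence the coefficient of $\bm{e}_{m}$ on the left-hand side is $c_{i, j}$ for the unique pair with $i + n_{pq} j = m \bmod n_{p}$, and similarly on the right it is $d_{i, j}$ for the unique pair with $i + n_{pq} j = m \bmod n_{q}$. Writing $m = i + n_{pq} s$ with $i = m \bmod n_{pq}$ and $0 \le s \le pq - 1$, a short computation using $n_{pq} \mid n_{p}$ and $n_{pq} \mid n_{q}$ shows that the left-hand coefficient is $c_{i,\, s \bmod q}$ and the right-hand coefficient is $d_{i,\, s \bmod p}$. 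Thus the hypothesis is equivalent to the system
\[
c_{i,\, s \bmod q} = d_{i,\, s \bmod p} \qquad (0 \le i \le n_{pq} - 1,\ 0 \le s \le pq - 1).
\]

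Finally, since $p$ and $q$ are coprime, the map $s \mapsto (s \bmod q,\, s \bmod p)$ is a bijection from $\{0, \ldots, pq - 1\}$ onto $\{0, \ldots, q - 1\} \times \{0, \ldots, p - 1\}$ by the Chinese Remainder Theorem. Hence the displayed relations assert exactly that $c_{i, a} = d_{i, b}$ for every $a \in \{0, \ldots, q - 1\}$ and every $b \in \{0, \ldots, p - 1\}$; fixing $i$ and letting $a$ and $b$ vary then forces $c_{i, 0} = \cdots = c_{i, q - 1} = d_{i, 0} = \cdots = d_{i, p - 1}$, as claimed. The only fiddly point is the bookkeeping in the third paragraph — keeping the three moduli $n_{p}$, $n_{q}$, $n_{pq}$ and the double indices aligned — but this becomes routine once the substitution $m = i + n_{pq} s$ is in place. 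Note that coprimality of $p$ and $q$ enters only at the very last step, and is precisely what upgrades the conclusion from the weaker relations of Lemma~\ref{lem:10} to the full equality of all the coefficients.
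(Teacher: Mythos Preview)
Your proof is correct and is essentially the same as the paper's. The paper also expands both sides in the standard basis, obtains the relations $c_{i,\,\mathrm{rem}_q(j)} = d_{i,\,\mathrm{rem}_p(j)}$ for $0 \le j \le pq - 1$, and then uses coprimality of $p$ and $q$ (via the observation $\{\mathrm{rem}_q(pk)\mid 0\le k\le q-1\}=\{0,\ldots,q-1\}$, which is your CRT bijection in slightly different clothing) to force all coefficients equal.
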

\begin{proof}
Suppose that $\sum_{i = 0}^{n_{p q} - 1} \sum_{j = 0}^{q - 1} c_{i, j} {\bm v}_{i + n_{p q} j}^{(p)} = \sum_{i = 0}^{n_{p q} - 1} \sum_{j = 0}^{p - 1} d_{i, j} {\bm v}_{i + n_{p q} j}^{(q)}$ holds, and fix any $0 \leq i \leq n_{p q} - 1$. 
Then, from 
\begin{align*}
\sum_{j = 0}^{q - 1} c_{i, j} {\bm v}_{i + n_{p q} j}^{(p)} 
&= \sum_{j = 0}^{q - 1} \sum_{k = 0}^{p - 1} c_{i, j} \bm{e}_{i + n_{pq} (j + q k)} 
= \sum_{j = 0}^{p q - 1} c_{i, \mathrm{rem}_{q}(j)} \bm{e}_{i + n_{p q} j}, \\
\sum_{j = 0}^{p - 1} d_{i, j} {\bm v}_{i + n_{p q} j}^{(q)} 
&= \sum_{j = 0}^{p - 1} \sum_{k = 0}^{q - 1} d_{i, j} \bm{e}_{i + n_{pq} (j + p k)} 
= \sum_{j = 0}^{p q - 1} d_{i, \mathrm{rem}_{p}(j)} \bm{e}_{i + n_{p q} j}, 
\end{align*}
where $\mathrm{rem}_{a}(j)$ denotes the reminder when $j$ is divided by $a$, 
we have $c_{i, \mathrm{rem}_{q}(j)} = d_{i, \mathrm{rem}_{p}(j)}$ for $0 \leq j \leq p q - 1$. 
From this, for any $0 \leq k \leq q - 1$, it holds that $c_{i, \mathrm{rem}_{q}(p k)} = d_{i, \mathrm{rem}_{p}(p k)} = d_{i, 0}$. 
Therefore, we have $c_{i, 0} = c_{i, 1} = \cdots = c_{i, q - 1} = d_{i, 0}$ since $\{\mathrm{rem}_{q} (p k) \mid 0 \leq k \leq q - 1 \} = \{0, 1, \ldots, q - 1 \}$ holds from $\gcd{(p, q)} = 1$. 
Similarly, we can prove $d_{i, 0} = d_{i, 1} = \cdots = d_{i, p - 1} = c_{i, 0}$. 
\end{proof}

For any positive divisors $p$ and $q$ of $n$, 
let $\sigma : \mathbb{Z}^{n_p} \times \mathbb{Z}^{n_q} \to \mathbb{Z}^{n}$ be the homomorphism of the $\mathbb{Z}$-modules defined by 
\[
\sigma (\bm{c}, \bm{d}) := \sum_{i = 0}^{n_{p} - 1} c_{i} {\bm v}^{(p)}_{i} + \sum_{i = 0}^{n_{q} - 1} d_{i} {\bm v}^{(q)}_{i}, 
\]
where $\bm{c} = (c_{0}, c_{1}, \ldots, c_{n_{p} - 1})$ and $\bm{d} = (d_{0}, d_{1}, \ldots, d_{n_{q} - 1})$. 
From Lemma~$\ref{lem:11}$, we have the following lemma. 

\begin{lem}\label{lem:12}
Let $p$ and $q$ be coprime positive integers satisfying $p q \mid n$. 
Then, we have 
\begin{align*}
\Ker(\sigma) = \{(\bm{c}, \bm{d}) \in \mathbb{Z}^{n_p} \times \mathbb{Z}^{n_q} \mid \bm{c} = (\underbrace{\bm{m}, \bm{m}, \ldots, \bm{m}}_{q}),\, \bm{d} = - (\underbrace{\bm{m}, \bm{m}, \ldots, \bm{m}}_{p}),\, \bm{m} \in \mathbb{Z}^{n_{p q}} \}. 
\end{align*}
\end{lem}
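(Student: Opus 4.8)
The statement is an immediate consequence of Lemmas~$\ref{lem:10}$ and $\ref{lem:11}$, so the plan is mostly a matter of organizing the index bookkeeping. Throughout I would use the factorizations $n_{p} = q\, n_{pq}$ and $n_{q} = p\, n_{pq}$, which let me write every index $k \in \{0,1,\ldots,n_{p}-1\}$ uniquely as $k = i + n_{pq} j$ with $0 \le i \le n_{pq}-1$ and $0 \le j \le q-1$, and likewise every index in $\{0,1,\ldots,n_{q}-1\}$ as $i + n_{pq} j$ with $0 \le j \le p-1$.

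First I would establish the inclusion $\supseteq$. Take $\bm{m} = (m_{0},\ldots,m_{n_{pq}-1}) \in \mathbb{Z}^{n_{pq}}$, and suppose $\bm{c}$ and $\bm{d}$ are obtained from $\bm{m}$ as in the statement, i.e. $c_{i + n_{pq} j} = m_{i}$ for $0 \le j \le q-1$ and $d_{i + n_{pq} j} = -m_{i}$ for $0 \le j \le p-1$. Regrouping the defining sum of $\sigma$ according to the decomposition above gives
\begin{align*}
\sigma(\bm{c}, \bm{d}) = \sum_{i=0}^{n_{pq}-1} m_{i} \left( \sum_{j=0}^{q-1} \bm{v}^{(p)}_{i + n_{pq} j} - \sum_{j=0}^{p-1} \bm{v}^{(q)}_{i + n_{pq} j} \right),
\end{align*}
and every bracketed difference vanishes by Lemma~$\ref{lem:10}$; hence $(\bm{c},\bm{d}) \in \Ker(\sigma)$.

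For the reverse inclusion, suppose $(\bm{c},\bm{d}) \in \Ker(\sigma)$, which means $\sum_{k=0}^{n_{p}-1} c_{k} \bm{v}^{(p)}_{k} = \sum_{k=0}^{n_{q}-1} (-d_{k}) \bm{v}^{(q)}_{k}$. Rewriting both sides with the index decomposition and setting $c_{i,j} := c_{i + n_{pq} j}$ and $d_{i,j} := -\,d_{i + n_{pq} j}$, this is exactly the hypothesis of Lemma~$\ref{lem:11}$, so that lemma yields, for each fixed $i$, a common value $m_{i} := c_{i,0} = \cdots = c_{i,q-1} = d_{i,0} = \cdots = d_{i,p-1}$. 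Unwinding the substitutions gives $c_{i + n_{pq} j} = m_{i}$ for all $j$ and $d_{i + n_{pq} j} = -m_{i}$ for all $j$, i.e. $(\bm{c},\bm{d})$ has the asserted form with $\bm{m} = (m_{0},\ldots,m_{n_{pq}-1})$. The only point that requires genuine care — and the step I would write out most carefully — is keeping the sign flip $d_{k} \mapsto -d_{k}$ consistent as one passes between $\sigma(\bm{c},\bm{d}) = \bm{0}$ and the equality of positive combinations to which Lemma~$\ref{lem:11}$ applies; beyond that, the argument is purely formal.
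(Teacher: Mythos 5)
Your proof is correct and follows essentially the same route as the paper, which simply states that Lemma~\ref{lem:12} ``follows from Lemma~\ref{lem:11}'' without writing out the details. You have merely made explicit what the paper leaves implicit: Lemma~\ref{lem:10} gives the inclusion $\supseteq$, the sign substitution $d_{i,j} := -d_{i+n_{pq}j}$ converts $\sigma(\bm{c},\bm{d})=\bm{0}$ into the hypothesis of Lemma~\ref{lem:11}, and that lemma then forces the block-constant form.
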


\subsection{Polynomial ring generated by circulant determinants}

\begin{lem}\label{lem:13}
For any positive divisor $p$ of $n$, we have $\bm{y}^{{\bm v}^{(p)}_{i}} = \Theta_{p}(\bm{y}^{(p)}_{i})$ for $0 \leq i \leq n_{p} - 1$.
\end{lem}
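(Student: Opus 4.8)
The plan is to expand both sides of the claimed identity in terms of the original variables $x_0,\dots,x_{n-1}$ and match them, invoking the classical eigenvalue factorization of a circulant determinant. Unwinding the definition ${\bm v}^{(p)}_i=\sum_{j=0}^{p-1}\bm e_{i+n_p j}$ shows at once that $\bm{y}^{{\bm v}^{(p)}_i}=\prod_{j=0}^{p-1}y_{i+n_p j}$, so the lemma reduces to proving
\[
\Theta_p(\bm y^{(p)}_i)=\prod_{k=0}^{p-1}y_{i+n_p k}.
\]

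First I would recall that for any $\bm z=(z_0,\dots,z_{p-1})$ the circulant determinant factors as $\Theta_p(\bm z)=\prod_{k=0}^{p-1}\bigl(\sum_{j=0}^{p-1}\zeta_p^{jk}z_j\bigr)$, the factors being its eigenvalues, since the circulant matrix is diagonalized by the Fourier matrix $(\zeta_p^{ab})_{0\le a,b\le p-1}$; here I take $\zeta_p:=\zeta_n^{n_p}$, in agreement with the convention already used in the proof of Lemma~\ref{lem:8}. Substituting $z_j=y^{(p)}_{i,j}=\sum_{l=0}^{n_p-1}\zeta_n^{i(j+pl)}x_{j+pl}$, the $k$-th factor becomes $\sum_{j=0}^{p-1}\sum_{l=0}^{n_p-1}\zeta_n^{n_p jk}\,\zeta_n^{i(j+pl)}x_{j+pl}$.

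Next I would compare this with $y_{i+n_p k}=\sum_{m=0}^{n-1}\zeta_n^{(i+n_p k)m}x_m$: writing $m=j+pl$ with $0\le j\le p-1$ and $0\le l\le n_p-1$, and using $\zeta_n^{n_p p}=\zeta_n^{n}=1$, the exponent $(i+n_p k)(j+pl)$ reduces modulo $n$ to $ij+n_p jk+ipl$, which is exactly the exponent found above. Hence the $k$-th factor equals $y_{i+n_p k}$; since $k\mapsto i+n_p k$ maps $\{0,\dots,p-1\}$ bijectively onto the support $\{i,\,i+n_p,\,\dots,\,i+(p-1)n_p\}$ of ${\bm v}^{(p)}_i$, multiplying the factors over $k$ gives $\Theta_p(\bm y^{(p)}_i)=\prod_{k=0}^{p-1}y_{i+n_p k}=\bm{y}^{{\bm v}^{(p)}_i}$, as desired.

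The argument is essentially routine; the only real care needed is in the bookkeeping of the double-indexed sums and in the modular reduction of the roots of unity. The single modeling decision is whether to cite or to reprove the eigenvalue factorization of a circulant determinant, and since this is entirely classical I would give only a one-line justification (diagonalization by the discrete Fourier transform matrix) rather than a full proof.
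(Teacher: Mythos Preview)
Your argument is correct and is essentially the same as the paper's: both reduce the claim to the eigenvalue factorization $\Theta_p(\bm z)=\prod_k\sum_j\zeta_p^{jk}z_j$ together with the identity $y_{i+n_p k}=\sum_{j=0}^{p-1}\zeta_p^{jk}y^{(p)}_{i,j}$, verified by writing $m=j+pl$ in the definition of $y_{i+n_p k}$ and reducing the exponent modulo $n$. The paper simply presents this in the reverse order (starting from $y_{i+n_p j}$ and arriving at the circulant eigenvalues), but the content is identical.
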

\begin{proof}
It follows from 
\begin{align*}
y_{i + n_{p} j} 
= \sum_{k = 0}^{n - 1} \zeta_{n}^{(i + n_{p} j) k} x_{k} 
= \sum_{k = 0}^{p - 1} \sum_{l = 0}^{n_{p} - 1} \zeta_{n}^{(i + n_{p} j) (k + p l)} x_{k + p l} 
= \sum_{k = 0}^{p - 1} \zeta_{p}^{j k} y^{(p)}_{i, k}
\end{align*}
that 
$\bm{y}^{{\bm v}^{(p)}_{i}} 
= \prod_{j = 0}^{p - 1} y_{i + n_{p} j} 
= \prod_{j = 0}^{p - 1} \sum_{k = 0}^{p - 1} \zeta_{p}^{j k} y^{(p)}_{i, k} 
= \Theta_{p}(\bm{y}^{(p)}_{i})$. 
\end{proof}

We define the polynomial ring $R_{n}$ by 
\[
R_{n} := \mathbb{C}[\Theta_{p}(\bm{y}^{(p)}_{i}) \mid \text{$p$ is a prime factor of $n$},\; 0 \leq i \leq n_{p} - 1]. 
\]
As seen in Section~$2.1$, it holds that 
\[
\mathbb{C}[\bm{x}]^{D} = \mathbb{C}[\bm{x}]^{\Delta} = \mathbb{C}\mathchar`-\Span\{ \bm{y}^{\bm{\alpha}} \mid \bm{\alpha} \in V_{n} \cap (\mathbb{Z}_{\geq 0})^{n} \}. 
\]
Therefore, from Lemmas~$\ref{lem:8}$ and $\ref{lem:13}$, 
we have the following lemma. 
\begin{lem}\label{lem:14}
For any integer $n \geq 2$, it holds that $\mathbb{C}[\bm{x}]^{D} = \mathbb{C}[\bm{x}]^{\Delta} \supset R_{n}$. 
\end{lem}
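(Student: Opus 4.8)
The plan is to deduce the inclusion directly from the description of $\mathbb{C}[\bm{x}]^{D}$ obtained in Section~2.1, together with the facts that $D$ acts on $\bm{y}$-monomials by scalars and that $V_{n}$ is a $\mathbb{Q}$-subspace. First I would record that the linear change of variables $y_{i}=\sum_{j}\zeta_{n}^{ij}x_{j}$ is invertible, being given by a Vandermonde matrix in the pairwise distinct powers $1,\zeta_{n},\dots,\zeta_{n}^{n-1}$; hence $\mathbb{C}[\bm{x}]=\mathbb{C}[\bm{y}]$, the operators $D$ and $\Delta$ take the diagonal form displayed in Section~2.1, and the equality
\[
\mathbb{C}[\bm{x}]^{D}=\mathbb{C}[\bm{x}]^{\Delta}=\mathbb{C}\text{-}\Span\{\bm{y}^{\bm{\alpha}}\mid\bm{\alpha}\in V_{n}\cap(\mathbb{Z}_{\geq 0})^{n}\}
\]
is available. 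In particular $D(\bm{y}^{\bm{\alpha}})=\bigl(\sum_{i}\alpha_{i}\zeta_{n}^{i}\bigr)\bm{y}^{\bm{\alpha}}$, so $\mathbb{C}[\bm{x}]^{D}$ is closed under multiplication; that is, it is a $\mathbb{C}$-subalgebra of $\mathbb{C}[\bm{x}]$.

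Next I would check that every generator of $R_{n}$ lies in $\mathbb{C}[\bm{x}]^{D}$. Fix a prime factor $p$ of $n$ and an index $0\leq i\leq n_{p}-1$. By Lemma~\ref{lem:13} we have $\Theta_{p}(\bm{y}^{(p)}_{i})=\bm{y}^{{\bm v}^{(p)}_{i}}$, and by Lemma~\ref{lem:8} the exponent vector ${\bm v}^{(p)}_{i}$ lies in $V_{n}\cap(\mathbb{Z}_{\geq 0})^{n}$; hence $\Theta_{p}(\bm{y}^{(p)}_{i})$ belongs to the spanning set above, and therefore to $\mathbb{C}[\bm{x}]^{D}=\mathbb{C}[\bm{x}]^{\Delta}$. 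Since $\mathbb{C}[\bm{x}]^{D}$ is a $\mathbb{C}$-subalgebra containing every generator of $R_{n}$, it contains $R_{n}$, which is the assertion. If one prefers to bypass the subalgebra remark, the same conclusion follows by noting that any monomial in the $\Theta_{p}(\bm{y}^{(p)}_{i})$ equals $\bm{y}^{\bm{\alpha}}$ with $\bm{\alpha}$ a $\mathbb{Z}_{\geq 0}$-linear combination of the ${\bm v}^{(p)}_{i}$; such an $\bm{\alpha}$ again lies in $V_{n}\cap(\mathbb{Z}_{\geq 0})^{n}$ because $V_{n}$ is closed under addition and non-negativity is manifestly preserved, and one then extends $\mathbb{C}$-linearly.

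There is essentially no hard step: the lemma is a formal consequence of Section~2.1 and Lemmas~\ref{lem:8} and~\ref{lem:13}. The only points deserving a line of justification are the invertibility of the $\bm{x}\leftrightarrow\bm{y}$ correspondence (so that moving between the two coordinate systems is harmless, and $\Theta_{p}(\bm{y}^{(p)}_{i})$ is genuinely an element of $\mathbb{C}[\bm{x}]$, which is anyway clear from its defining formula) and the closure of $V_{n}\cap(\mathbb{Z}_{\geq 0})^{n}$ under addition. The reverse inclusion $\mathbb{C}[\bm{x}]^{D}\subset R_{n}$ in the one- and two-prime-factor cases is the genuinely substantial statement, but it is handled separately in Section~3 and is not needed here.
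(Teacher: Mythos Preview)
Your argument is correct and follows essentially the same route as the paper: the paper simply records the identity $\mathbb{C}[\bm{x}]^{D}=\mathbb{C}[\bm{x}]^{\Delta}=\mathbb{C}\text{-}\Span\{\bm{y}^{\bm{\alpha}}\mid\bm{\alpha}\in V_{n}\cap(\mathbb{Z}_{\geq 0})^{n}\}$ from Section~2.1 and then cites Lemmas~\ref{lem:8} and~\ref{lem:13} to conclude. Your version merely spells out a couple of details (invertibility of the change of variables, closure under multiplication) that the paper leaves implicit.
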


\section{Proofs of theorems}
\subsection{Proof of Theorem~$\mathbf{\ref{thm:1}}$}
We prove Theorem~$\ref{thm:1}$. 
From Section~$2.3$, we find that to establish Theorem~$\ref{thm:1}$, 
it is sufficient to prove that the following holds: 
Take any $\bm{\alpha}  = (\alpha_{0}, \alpha_{1}, \ldots, \alpha_{n-1}) \in V_{n} \cap (\mathbb{Z}_{\geq 0})^{n}$. 
\begin{enumerate}
\item[(i)] If $n$ has exactly one prime factor $p$, then there exist non-negative integers $c_{i}$ satisfying $\bm{\alpha} = \sum_{i = 0}^{n_{p} - 1} c_{i} {\bm v}^{(p)}_{i}$. 
\item[(ii)] If $n$ has exactly two prime factors $p$ and $q$, then there exist non-negative integers $c_{i}$ and $d_{i}$ satisfying $\bm{\alpha} = \sum_{i = 0}^{n_{p} - 1} c_{i} {\bm v}^{(p)}_{i} + \sum_{i = 0}^{n_{q} - 1} d_{i} {\bm v}^{(q)}_{i}$. 
\end{enumerate}
That (i) is true can be easily verified as follows: 
It follows from Lemma~$\ref{lem:7}$ that, when $n$ has exactly one prime factor $p$, the vectors ${\bm v}^{(p)}_{i}$ ($0 \leq i \leq n_{p} - 1$) form a basis of the vector space $V_{n}$ over $\mathbb{Q}$ since they are linearly independent. 
Therefore, the coordinates $c_{i}$ ($0 \leq i \leq n_{p} - 1$) of $\bm{\alpha}$ with respect to the basis 
are uniquely determined as $c_{i} = \alpha_{i} \in \mathbb{Z}_{\geq 0}$. 

We prove that (ii) is true. 
Suppose that $n$ has exactly two prime factors $p$ and $q$. 
\begin{lem}\label{lem:15}
The vectors ${\bm v}^{(p)}_{i} \: (0 \leq i \leq n_{p} - 1)$ and ${\bm v}^{(q)}_{i} \: (0 \leq i \leq n_{q} - n_{pq} - 1)$ form a basis of the vector space $V_{n}$ over $\mathbb{Q}$. 
\end{lem}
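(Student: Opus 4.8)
The plan is to prove Lemma~\ref{lem:15} by a dimension count together with a spanning argument, reusing the coordinate description already set up in Lemma~\ref{lem:7} and Lemma~\ref{lem:12}. First I would record the relevant dimensions: by Lemma~\ref{lem:7} we have $\dim_{\mathbb{Q}} V_{n} = n - \varphi(n)$, and when $n$ has exactly the two prime factors $p$ and $q$ one computes $n - \varphi(n) = n_{p} + n_{q} - n_{pq}$ (this is the inclusion–exclusion identity $\varphi(n) = n(1-1/p')(1-1/q')$ rewritten, where $p',q'$ are the two primes; alternatively $n - \varphi(n) = n/p' + n/q' - n/(p'q')$, and since $p'q' \mid n$ this equals $n_{p'} + n_{q'} - n_{p'q'}$, which by $p q \mid n$ coprime has the same form). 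The candidate set has exactly $n_{p} + (n_{q} - n_{pq}) = n_{p} + n_{q} - n_{pq}$ vectors, matching the dimension, so it suffices to prove either linear independence or spanning; I would aim for linear independence, since spanning of $V_n$ by the full family $\{{\bm v}^{(p)}_i\}\cup\{{\bm v}^{(q)}_i\}$ together with the precise kernel from Lemma~\ref{lem:12} already pins down a spanning subset.

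Concretely, the cleanest route is: the two families $\{{\bm v}^{(p)}_i : 0\le i \le n_p-1\}$ and $\{{\bm v}^{(q)}_i : 0 \le i \le n_q-1\}$ together span $V_n$ — this follows because their $\mathbb{Q}$-span is a subspace of $V_n$ (Lemma~\ref{lem:8}) whose dimension, by the rank–nullity theorem applied to $\sigma$ and the kernel computation in Lemma~\ref{lem:12}, equals $(n_p + n_q) - n_{pq} = \dim_{\mathbb Q} V_n$, hence is all of $V_n$. Now Lemma~\ref{lem:12} says $\Ker(\sigma)$ consists exactly of the pairs $(\bm c, \bm d)$ with $\bm c = (\bm m,\dots,\bm m)$ ($q$ copies) and $\bm d = -(\bm m,\dots,\bm m)$ ($p$ copies) for $\bm m \in \mathbb{Z}^{n_{pq}}$ (and the same over $\mathbb{Q}$, since the argument is linear-algebraic). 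So from the spanning family I can discard the vectors ${\bm v}^{(q)}_{i}$ with $n_q - n_{pq} \le i \le n_q - 1$ — equivalently ${\bm v}^{(q)}_{i + n_{pq}(p-1)}$ for $0 \le i \le n_{pq}-1$ — because each such vector, using Lemma~\ref{lem:10}, can be rewritten as $\sum_{j=0}^{q-1}{\bm v}^{(p)}_{i+n_{pq}j} - \sum_{j=0}^{p-2}{\bm v}^{(q)}_{i+n_{pq}j}$, a $\mathbb{Q}$-combination of the retained vectors. Hence the remaining set ${\bm v}^{(p)}_i\ (0\le i\le n_p-1)$, ${\bm v}^{(q)}_i\ (0 \le i \le n_q - n_{pq}-1)$ still spans $V_n$; since its cardinality equals $\dim_{\mathbb Q}V_n$, it is a basis.

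The step I expect to be the main obstacle — really the only non-formal point — is the arithmetic identity $n - \varphi(n) = n_p + n_q - n_{pq}$ when $n$ has exactly two prime factors, which must be handled carefully because $p$ and $q$ in the lemma statement need not be the primes themselves but only coprime integers with $pq \mid n$ whose product captures all prime factors; I would want to argue that in the relevant case $p$ and $q$ can be taken to be the prime-power parts of $n$ (or reduce to that), so that $n_{pq} = 1$ forces nothing unexpected, and then the Euler-function product formula gives the identity directly. Everything else is bookkeeping with the explicit vectors ${\bm v}^{(p)}_i$ and the already-established Lemmas~\ref{lem:8}, \ref{lem:10}, and \ref{lem:12}, so the proof should be short once that dimension identity is nailed down.
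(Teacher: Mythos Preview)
Your argument is correct, though it takes the dual route to the paper's proof. Both begin with the dimension count $\dim_{\mathbb{Q}} V_n = n - \varphi(n) = n_p + n_q - n_{pq}$ and the observation that the candidate set has exactly this cardinality. The paper then verifies \emph{linear independence} directly: it pads the missing coefficients $d_i$ (for $n_q - n_{pq} \le i \le n_q - 1$) with zeros, rewrites the dependence relation in the double-index form of Lemma~\ref{lem:11}, and reads off that all coefficients must vanish because one block of $d$'s is forced to be zero. You instead verify \emph{spanning}: rank--nullity for $\sigma\otimes\mathbb{Q}$ together with the kernel description of Lemma~\ref{lem:12} shows the full family $\{{\bm v}^{(p)}_i\}\cup\{{\bm v}^{(q)}_i\}$ spans $V_n$, and then Lemma~\ref{lem:10} lets you discard the last $n_{pq}$ vectors ${\bm v}^{(q)}_{i}$ while preserving the span. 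Since Lemma~\ref{lem:12} is itself a direct consequence of Lemma~\ref{lem:11}, the two proofs rest on the same computation; the paper's version is marginally more self-contained because it does not need Lemma~\ref{lem:10} or an explicit rank--nullity step.

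One clarification: your stated ``main obstacle'' is not actually an obstacle. In the context where Lemma~\ref{lem:15} is stated (the sentence immediately preceding it reads ``Suppose that $n$ has exactly two prime factors $p$ and $q$''), the symbols $p$ and $q$ \emph{are} the two prime factors of $n$, not merely coprime divisors. The identity $n - \varphi(n) = n_p + n_q - n_{pq}$ is then an immediate consequence of $\varphi(n) = n(1-1/p)(1-1/q)$, and no further reduction is needed.
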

\begin{proof}
From Lemma~$\ref{lem:7}$, the dimension of $V_{n}$ over $\mathbb{Q}$ is $n - \varphi(n) = n_{p} + n_{q} - n_{pq}$. 
Thus, from Lemma~$\ref{lem:8}$, it suffices to verify the linear independence of the vectors. 
Assume that for some $c_{i}, d_{i} \in \mathbb{Q}$, the following holds: 
\[
\sum_{i = 0}^{n_{p}  - 1} c_{i} {\bm v}^{(p)}_{i} + \sum_{i = 0}^{n_{q} - n_{pq} - 1} d_{i} {\bm v}^{(q)}_{i} = \bm{0}.
\]
Then, by letting $d_{i} := 0$ for $n_{q} - n_{p q} \leq i \leq n_{q} - 1$, we obtain
\[
\sum_{i = 0}^{n_{p q} - 1} \sum_{j = 0}^{q - 1} c_{i, j} {\bm v}_{i + n_{p q} j}^{(p)} 
= \sum_{i = 0}^{n_{p q} - 1} \sum_{j = 0}^{p - 1} d_{i, j} {\bm v}_{i + n_{p q} j}^{(q)}, 
\]
where $c_{i, j} := c_{i + n_{p q} j}$ and $d_{i, j} := - d_{i + n_{p q} j}$. 
Thus, from Lemma~$\ref{lem:11}$, it follows that $c_{i, 0} = c_{i, 1} = \cdots = c_{i, q -1} = d_{i, 0} = d_{i, 1} = \cdots = d_{i, p - 1} = - d_{i + n_{q} - n_{p q}} = 0$ for any $0 \leq i \leq n_{p q} - 1$. 
That is, it holds that $c_{0} = c_{1} = \cdots = c_{n_{p} - 1} = d_{0} = d_{1} = \cdots = d_{n_{q} - n_{p q} - 1} = 0$. 
\end{proof}

From Lemma~$\ref{lem:15}$, there exist $c_{i}, d_{i} \in \mathbb{Q}$ satisfying 
\begin{align*}
\bm{\alpha} 
= \sum_{i = 0}^{n_{p} - 1} c_{i} {\bm v}^{(p)}_{i} + \sum_{i = 0}^{n_{q} - n_{p q} - 1} d_{i} {\bm v}^{(q)}_{i} 
= \sum_{i = 0}^{n_{p q} - 1} \sum_{j = 0}^{q - 1} c_{i + n_{p q} j} {\bm v}_{i + n_{p q} j}^{(p)} + \sum_{i = 0}^{n_{p q} - 1} \sum_{j = 0}^{p - 2} d_{i + n_{p q} j} {\bm v}_{i + n_{p q} j}^{(q)}. 
\end{align*}
Let $c^{(i)} := \min{\{ c_{i + n_{p q} j} \mid 0 \leq j \leq q - 1 \} }$ for each $0 \leq i \leq n_{p q} - 1$. 
Then, from Lemma~$\ref{lem:10}$, 
we have 
\begin{align*}
\sum_{i = 0}^{n_{p q} - 1} \sum_{j = 0}^{q - 1} c_{i + n_{p q} j} {\bm v}^{(p)}_{i + n_{p q} j} 
&= \sum_{i = 0}^{n_{p q} - 1} \sum_{j = 0}^{q - 1} (c_{i + n_{p q} j} - c^{(i)}) {\bm v}^{(p)}_{i + n_{p q} j} + \sum_{i = 0}^{n_{p q} - 1} c^{(i)} \sum_{j = 0}^{q - 1} {\bm v}^{(p)}_{i + n_{p q}j} \allowdisplaybreaks \\ 
&= \sum_{i = 0}^{n_{p q} - 1} \sum_{j = 0}^{q - 1} (c_{i + n_{p q} j} - c^{(i)}) {\bm v}^{(p)}_{i + n_{p q} j} + \sum_{i = 0}^{n_{p q} - 1} c^{(i)} \sum_{j = 0}^{p - 1} {\bm v}^{(q)}_{i + n_{p q} j} \allowdisplaybreaks \\
&= \sum_{i = 0}^{n_{p q} - 1} \sum_{j = 0}^{q - 1} (c_{i + n_{p q} j} - c^{(i)}) {\bm v}^{(p)}_{i + n_{p q} j} + \sum_{i = 0}^{n_{p q} - 1} \sum_{j = 0}^{p - 2} c^{(i)} {\bm v}^{(q)}_{i + n_{p q} j} \\
&\qquad + \sum_{i = 0}^{n_{pq} - 1} c^{(i)} {\bm v}^{(q)}_{i + n_{q} - n_{p q}}. 
\end{align*}
Therefore, we can express $\bm{\alpha}$ as 
\begin{align*}
\bm{\alpha} 
= \sum_{i = 0}^{n_{p q} - 1} \sum_{j = 0}^{q - 1} (c_{i + n_{p q} j} - c^{(i)}) {\bm v}^{(p)}_{i + n_{p q} j} 
+ \sum_{i = 0}^{n_{p q} - 1} \sum_{j = 0}^{p - 2} (c^{(i)} + d_{i + n_{p q} j} ) {\bm v}^{(q)}_{i + n_{p q} j} 
+ \sum_{i = 0}^{n_{pq} - 1} c^{(i)} {\bm v}^{(q)}_{i + n_{q} - n_{p q}}. 
\end{align*}

The following two lemmas complete the proof of Theorem~$\ref{thm:1}$. 

\begin{lem}\label{lem:16}
For any $0 \leq i \leq n_{p} - 1$, 
it holds that $c_{i} \in \mathbb{Z}_{\geq 0}$.  
\end{lem}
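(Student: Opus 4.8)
The plan is to show that, for each $i$, the rational coefficient $c_i$ appearing in the expansion of $\bm{\alpha}$ (which is uniquely determined by Lemma~\ref{lem:15}, since the vectors $\bm{v}^{(p)}_{i'}$ with $0\le i'\le n_p-1$ together with the $\bm{v}^{(q)}_{i'}$ with $0\le i'\le n_q-n_{pq}-1$ form a basis of $V_n$) is literally equal to one of the entries $\alpha_k$ of $\bm{\alpha}$; since $\bm{\alpha}\in(\mathbb{Z}_{\geq 0})^n$, this forces $c_i\in\mathbb{Z}_{\geq 0}$.

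The first step is the elementary observation that the $k$-th coordinate of $\bm{v}^{(p)}_{i'}$ equals $1$ when $k\equiv i'\pmod{n_p}$ and $0$ otherwise, and likewise for $\bm{v}^{(q)}_{i'}$ with $n_q$ in place of $n_p$. Comparing $k$-th coordinates in $\bm{\alpha}=\sum_{i'}c_{i'}\bm{v}^{(p)}_{i'}+\sum_{i'}d_{i'}\bm{v}^{(q)}_{i'}$ then gives $\alpha_k=c_{\mathrm{rem}_{n_p}(k)}+\varepsilon$, where $\varepsilon=d_{\mathrm{rem}_{n_q}(k)}$ if $\mathrm{rem}_{n_q}(k)\le n_q-n_{pq}-1$ and $\varepsilon=0$ otherwise (using that within the index range $0\le i'\le n_q-n_{pq}-1$ at most one $i'$ is congruent to $k$ modulo $n_q$, and within $0\le i'\le n_p-1$ exactly one is congruent to $k$ modulo $n_p$, namely $\mathrm{rem}_{n_p}(k)$). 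Hence it is enough, for a fixed $i\in\{0,\dots,n_p-1\}$, to produce an index $k\in\{0,\dots,n-1\}$ with $\mathrm{rem}_{n_p}(k)=i$ and $\mathrm{rem}_{n_q}(k)\ge n_q-n_{pq}$: for such a $k$ the $d$-term drops out and $c_i=\alpha_k$.

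To produce such a $k$ I would invoke the coprimality of $p$ and $q$, which yields $\gcd(n_p,n_q)=n_{pq}$ and $\lcm(n_p,n_q)=n$. By the Chinese Remainder Theorem for these (non-coprime) moduli, the map $k\mapsto(\mathrm{rem}_{n_p}(k),\mathrm{rem}_{n_q}(k))$ is a bijection from $\{0,\dots,n-1\}$ onto the set of pairs $(a,b)$ with $0\le a<n_p$, $0\le b<n_q$, and $a\equiv b\pmod{n_{pq}}$. Choosing $b:=\mathrm{rem}_{n_{pq}}(i)+(p-1)n_{pq}$, one checks that $b\equiv i\pmod{n_{pq}}$ and $n_q-n_{pq}\le b\le n_q-1$, so $(i,b)$ is an admissible pair and the corresponding $k$ has the required two properties; then $c_i=\alpha_k\in\mathbb{Z}_{\geq 0}$.

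The only delicate points are the bookkeeping in the non-coprime CRT step and the verification of the two inequalities that pin $b$ inside $\{0,\dots,n_q-1\}$ but outside the range $\{0,\dots,n_q-n_{pq}-1\}$ over which the $d_{i'}$ are indexed; this is the main obstacle, although it remains routine once the combinatorial description of the $\bm{v}^{(p)}_i$ and $\bm{v}^{(q)}_i$ is set up. Everything else is immediate.
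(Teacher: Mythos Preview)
Your proposal is correct and follows essentially the same approach as the paper: both arguments identify, for each $i$, an index $k\in\{0,\dots,n-1\}$ with $k\equiv i\pmod{n_p}$ and $\mathrm{rem}_{n_q}(k)\in\{n_q-n_{pq},\dots,n_q-1\}$, so that the $d$-contribution to $\alpha_k$ vanishes and $c_i=\alpha_k\in\mathbb{Z}_{\ge 0}$. The only cosmetic difference is that the paper proves the existence of such $k$ by showing directly that the map $(i',j)\mapsto (i'+n_q j)\bmod n_p$ from $I=\{n_q-n_{pq},\dots,n_q-1\}\times\{0,\dots,q-1\}$ to $\{0,\dots,n_p-1\}$ is injective (hence bijective by counting), whereas you invoke the Chinese Remainder Theorem for the non-coprime moduli $n_p,n_q$ and write down an explicit $b$; these are two phrasings of the same bijection.
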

\begin{proof}
Let $I := \{(i, j) \mid n_{q} - n_{p q} \leq i \leq n_{q} - 1,\; 0 \leq j \leq q - 1\}$. 
We first prove that if $i + n_{q} j \equiv i' + n_{q} j' \pmod{n_{p}}$ for some $(i, j), (i', j') \in I$, then $i = i'$ and $j = j'$. 
Suppose that $i + n_{q} j = i' + n_{q} j' + n_{p} k$ holds for some integer $k$. Then, from $n_{p q} - 1 \geq \lvert i - i' \rvert = n_{p q} \lvert p (j' - j) + q k \rvert$ and $\gcd(p, q) = 1$, we have $j' - j = k = 0$, 
and therefore we also have $i - i' = 0$. 
From the above, it holds that for any $0 \leq k \leq n_{p} - 1$, there exists $(i, j) \in I$ satisfying $k \equiv i + n_{q} j \pmod{n_{p}}$ since the cardinality of $I$ is $n_{p}$. 
From this and the fact that for every $(i, j) \in I$, the $(i + n_{q} j)$-th component of the vector $\sum_{i = 0}^{n_{q} - n_{pq} - 1} d_{i} {\bm v}^{(q)}_{i}$ is zero, we find that for any $0 \leq k \leq n_{p} - 1$, there exists $(i, j) \in I$ satisfying $\alpha_{i + n_{q} j} = c_{k}$. 
Therefore, from $\bm{\alpha} \in (\mathbb{Z}_{\geq 0})^{n}$, the lemma is obtained. 
\end{proof} 

\begin{lem}\label{lem:17}
For any $0 \leq i \leq n_{p q} - 1$ and $0 \leq j \leq p - 2$, 
it holds that $c^{(i)} + d_{i + n_{p q} j} \in \mathbb{Z}_{\geq 0}$. 
\end{lem}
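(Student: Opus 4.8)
The plan is to mimic the strategy of Lemma~\ref{lem:16}: exhibit, for each pair $(i,j)$ with $0 \le i \le n_{pq}-1$ and $0 \le j \le p-2$, a single component index $m$ of $\bm{\alpha}$ whose value equals exactly $c^{(i)} + d_{i + n_{pq} j}$; since $\bm{\alpha} \in (\mathbb{Z}_{\ge 0})^n$, this will force $c^{(i)} + d_{i + n_{pq} j} \ge 0$, and integrality follows because all the $c_k, d_k$ are rational with the $c_k$ already integers by Lemma~\ref{lem:16} (one also checks the $d_k$ are integers, e.g.\ from the same integrality of the relevant components of $\bm{\alpha}$). The starting point is the expression
\[
\bm{\alpha}
= \sum_{i = 0}^{n_{p q} - 1} \sum_{j = 0}^{q - 1} (c_{i + n_{p q} j} - c^{(i)}) {\bm v}^{(p)}_{i + n_{p q} j}
+ \sum_{i = 0}^{n_{p q} - 1} \sum_{j = 0}^{p - 2} (c^{(i)} + d_{i + n_{p q} j} ) {\bm v}^{(q)}_{i + n_{p q} j}
+ \sum_{i = 0}^{n_{pq} - 1} c^{(i)} {\bm v}^{(q)}_{i + n_{q} - n_{p q}}
\]
derived just above the lemma.

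First I would fix $(i_0, j_0)$ with $0 \le i_0 \le n_{pq}-1$, $0 \le j_0 \le p-2$, and look for an index $m$ lying in the support of ${\bm v}^{(q)}_{i_0 + n_{pq} j_0}$ but in the support of no other ${\bm v}^{(q)}_\bullet$ appearing (automatic, since distinct ${\bm v}^{(q)}_a$ have disjoint supports for $a$ in a set of residues mod $n_q$) and, crucially, in the support of those ${\bm v}^{(p)}_{i+n_{pq}j}$ only for indices with $c_{i+n_{pq}j} - c^{(i)} = 0$. Recall ${\bm v}^{(q)}_{a}$ is supported on $\{a + n_q \ell : 0 \le \ell \le q-1\}$ and ${\bm v}^{(p)}_{b}$ on $\{b + n_p \ell : 0 \le \ell \le p-1\}$. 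The key arithmetic point, exactly as in Lemma~\ref{lem:16} via $\gcd(p,q)=1$, is that a component index $m \equiv i_0 + n_{pq} j_0 \pmod{n_q}$ determines its residue mod $n_{pq}$ to be $i_0$, and then as $m$ ranges over the $q$ allowed values $m = i_0 + n_{pq} j_0 + n_q \ell$ ($0 \le \ell \le q-1$), the residues $\operatorname{rem}_{n_p}(m)$ run over all $q$ values of the form $i_0 + n_{pq}(j_0 + q\ell) \bmod n_p$, i.e.\ over the full set $\{i_0 + n_{pq} j : 0 \le j \le q-1\} \bmod n_p$. Among these $q$ residues, pick $\ell$ so that the corresponding ${\bm v}^{(p)}$-index $i_0 + n_{pq} j_1$ attains the minimum, i.e.\ $c_{i_0 + n_{pq} j_1} = c^{(i_0)}$; such $j_1$ exists by definition of $c^{(i_0)}$. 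For that particular $m$, the ${\bm v}^{(p)}$-contribution is $c_{i_0+n_{pq}j_1} - c^{(i_0)} = 0$, the only ${\bm v}^{(q)}$ (from the middle sum) covering $m$ is ${\bm v}^{(q)}_{i_0 + n_{pq} j_0}$ with coefficient $c^{(i_0)} + d_{i_0 + n_{pq} j_0}$, and — provided $m$ is not also covered by the last sum — we get $\alpha_m = c^{(i_0)} + d_{i_0 + n_{pq} j_0}$, as desired.

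The remaining bookkeeping is to confirm $m$ avoids the support of the last sum $\sum_{i} c^{(i)} {\bm v}^{(q)}_{i + n_q - n_{pq}}$: those vectors are supported on indices $\equiv i + n_q - n_{pq} \equiv i - n_{pq} \pmod{n_q}$ for $0 \le i \le n_{pq}-1$, i.e.\ on the residue class $-n_{pq} + \{0,\dots,n_{pq}-1\} \bmod n_q$, which is disjoint from the residue class $j_0 n_{pq} + \{0,\dots,n_{pq}-1\} \bmod n_q$ precisely when $j_0 \not\equiv -1 \pmod p$; since $0 \le j_0 \le p-2$ this holds, which is exactly why the lemma restricts to $j \le p-2$ and why the $j=p-1$ terms were absorbed into the last sum earlier. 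I expect this disjointness/counting step — tracking the three families of support sets modulo $n_p$, $n_q$, and $n_{pq}$ simultaneously and invoking $\gcd(p,q)=1$ at the right moment — to be the main obstacle; the positivity conclusion itself is then immediate from $\bm{\alpha} \in (\mathbb{Z}_{\ge 0})^n$.
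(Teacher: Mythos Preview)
Your proposal is correct and follows essentially the same route as the paper: both examine $\alpha_m$ for $m = i_0 + n_{pq}j_0 + n_q\ell$ and use $\gcd(p,q)=1$ to show that, as $\ell$ ranges over $\{0,\dots,q-1\}$, the index $\operatorname{rem}_{n_p}(m)$ sweeps out exactly $\{i_0 + n_{pq}j : 0 \le j \le q-1\}$, so one may select the $\ell$ realizing $c^{(i_0)}$. The only difference is cosmetic --- the paper reads off $\alpha_m = c_k + d_{i_0+n_{pq}j_0}$ directly from the original two-term expression for $\bm{\alpha}$ rather than the rewritten three-term one, which spares it your disjointness check against the last sum (and note a small slip: since $n_q = p\,n_{pq}$, the residue is $i_0 + n_{pq}(j_0 + p\ell)$, not $j_0 + q\ell$).
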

\begin{proof}
Fix any $0 \leq i \leq n_{p q} - 1$ and $0 \leq j \leq p - 2$. 
It follows from $\bm{\alpha} \in (\mathbb{Z}_{\geq 0})^{n}$ that $c_{k} + d_{i + n_{p q} j} = \alpha_{i + n_{p q} j + n_{q} l} \in \mathbb{Z}_{\geq 0}$ for any $0 \leq l \leq q - 1$, where $k$ is the reminder when $i + n_{p q} j + n_{q} l$ is divided by $n_{p}$. 
Thus, it holds that $\tilde{c}^{(i)} + d_{i + n_{p q} j} \in \mathbb{Z}_{\geq 0}$, where 
\begin{align*}
\tilde{c}^{(i)} := \min{\{c_{k} \mid k \equiv i + n_{p q} j + n_{q} l \!\!\! \pmod{n_{p}},\; 0 \leq l \leq q - 1,\; 0 \leq k \leq n_{p} - 1 \} }. 
\end{align*}
We prove $\tilde{c}^{(i)} = c^{(i)}$. 
For the purpose, it suffices to show that $I_{i} = J_{i}$, where 
\begin{align*}
I_{i} &:= \{ k \mid k \equiv i + n_{p q} j + n_{q} l \!\!\! \pmod{n_{p}},\; 0 \leq l \leq q - 1,\; 0 \leq k \leq n_{p} - 1 \}, \\ 
J_{i} &:= \{ k \mid k \equiv i \!\!\! \pmod{n_{p q}},\; 0 \leq k \leq n_{p} - 1 \}. 
\end{align*}
It is readily verified that $I_{i} \subset J_{i}$. 
We demonstrate $I_{i} \supset J_{i}$. 
Let $k = i + n_{p q} j' \in J_{i}$. 
From $\gcd(p, q) = 1$, it follows that there exist integers $a$ and $b$ satisfying $j' - j = p a + q b$, which implies $k = i + n_{p q} j + n_{q} a + n_{p} b$. 
Thus, by choosing the integers $m$ and $r$ with $0 \leq r \leq q - 1$ satisfying $a = q m + r$, we have $k = i + n_{p q} j + n_{q} (q m + r) + n_{p} b = i + n_{p q} j + n_{q} r + n_{p} (p m + b) \in I_{i}$. 
\end{proof}

\subsection{Proof of Theorem~$\mathbf{\ref{thm:2}}$}
We prove Theorem~$\ref{thm:2}$. 
For any $0 \leq i < i' \leq n_{p} - 1$, it follows from Lemma~$\ref{lem:13}$ that $\Theta_{p}(\bm{y}^{(p)}_{i})$ and $\Theta_{p}(\bm{y}^{(p)}_{i'})$ have no common variables $y_{j}$, so $\Theta_{p}(\bm{y}^{(p)}_{0}), \Theta_{p}(\bm{y}^{(p)}_{1}), \ldots, \Theta_{p}(\bm{y}^{(p)}_{n_{p} - 1})$ can be regarded as $n_{p}$ independent variables. 
From this, we have $\Ker(\rho) = \{0 \}$. 

Let $\mathfrak{I} := (t_{0}, t_{1}, \ldots, t_{n_{p q} - 1})$ be the ideal of $\mathbb{C}[z_{i}, w_{j} \,\vert\, 0 \leq i \leq n_{p} - 1,\, 0 \leq j \leq n_{q} - 1]$. 
We prove $\Ker(\rho') = \mathfrak{I}$. 
From Lemmas~$\ref{lem:10}$ and $\ref{lem:13}$, it holds that $t_{0}, t_{1}, \ldots, t_{n_{p q} - 1} \in \Ker(\rho')$. 
Thus, we have $\Ker(\rho') \supset \mathfrak{I}$. 
Below, we prove $\Ker(\rho') \subset \mathfrak{I}$. 
Let $\bm{z} := {}^{t}(z_{0}, z_{1}, \ldots, z_{n_{p} - 1})$ and $\bm{w} := {}^{t}(w_{0}, w_{1}, \ldots, w_{n_{q} - 1})$. 
Suppose now that 
\[
\sum_{(\bm{c}, \bm{d}) \in S} a_{(\bm{c}, \bm{d})} \bm{z}^{\bm{c}} \bm{w}^{\bm{d}} \in \Ker(\rho')
\]
for some $S \subset (\mathbb{Z}_{\geq 0})^{n_{p}} \times (\mathbb{Z}_{\geq 0})^{n_{q}}$ and $a_{(\bm{c}, \bm{d})} \in \mathbb{C}$. 
Then, from Lemma~$\ref{lem:13}$, we have 
\begin{align*}
0 = \rho' \left(\sum_{(\bm{c}, \bm{d}) \in S} a_{(\bm{c}, \bm{d})} \bm{z}^{\bm{c}} \bm{w}^{\bm{d}} \right) 
= \sum_{(\bm{c}, \bm{d}) \in S} a_{(\bm{c}, \bm{d})} \bm{y}^{\sigma (\bm{c}, \bm{d})} 
= \sum_{\bm{\alpha} \in V_{S}} \sum_{(\bm{c}, \bm{d}) \in S_{\bm{\alpha}}} a_{(\bm{c}, \bm{d})} \bm{y}^{\bm{\alpha}}, 
\end{align*}
where $V_{S} := \{\sigma (\bm{c}, \bm{d}) \in (\mathbb{Z}_{\geq 0})^{n} \mid (\bm{c}, \bm{d}) \in S \}$ and $S_{\bm{\alpha}} := \{(\bm{c}, \bm{d}) \in S \mid \sigma (\bm{c}, \bm{d}) = \bm{\alpha} \}$. 
Comparing the coefficient of $\bm{y}^{\bm{\alpha}}$ gives the following lemma. 

\begin{lem}\label{lem:18}
For any $\bm{\alpha} \in V_{S}$, it holds that $\sum_{(\bm{c}, \bm{d}) \in S_{\bm{\alpha}}} a_{(\bm{c}, \bm{d})} = 0$. 
\end{lem}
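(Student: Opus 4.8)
The plan is to deduce the lemma directly from the linear independence of distinct monomials, so that essentially no new computation is needed beyond bookkeeping. First I would recall that the substitution $y_{i} = \sum_{j=0}^{n-1}\zeta_{n}^{ij}x_{j}$ is given by the matrix $(\zeta_{n}^{ij})_{0\le i,j\le n-1}$, which is invertible; hence $\mathbb{C}[\bm{x}]=\mathbb{C}[\bm{y}]$ is a polynomial ring in the variables $y_{0},y_{1},\ldots,y_{n-1}$, and the set of monomials $\{\bm{y}^{\bm{\alpha}}\mid \bm{\alpha}\in(\mathbb{Z}_{\ge0})^{n}\}$ is a $\mathbb{C}$-basis of it. Since the entries of each ${\bm v}^{(p)}_{i}$ and ${\bm v}^{(q)}_{i}$ are non-negative, every $\sigma(\bm{c},\bm{d})$ with $(\bm{c},\bm{d})\in S\subset(\mathbb{Z}_{\ge0})^{n_{p}}\times(\mathbb{Z}_{\ge0})^{n_{q}}$ lies in $(\mathbb{Z}_{\ge0})^{n}$, so each $\bm{y}^{\sigma(\bm{c},\bm{d})}$ is a genuine monomial and $V_{S}\subset(\mathbb{Z}_{\ge0})^{n}$.

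Next I would use the partition of the index set $S$ by the value of $\sigma$: by the definitions of $V_{S}$ and $S_{\bm{\alpha}}$ one has the disjoint decomposition $S=\bigsqcup_{\bm{\alpha}\in V_{S}}S_{\bm{\alpha}}$, which is exactly the regrouping already carried out in the displayed equality preceding the lemma, namely
\[
0=\sum_{(\bm{c},\bm{d})\in S}a_{(\bm{c},\bm{d})}\,\bm{y}^{\sigma(\bm{c},\bm{d})}
=\sum_{\bm{\alpha}\in V_{S}}\Bigl(\sum_{(\bm{c},\bm{d})\in S_{\bm{\alpha}}}a_{(\bm{c},\bm{d})}\Bigr)\bm{y}^{\bm{\alpha}}.
\]
This exhibits $0$ as a $\mathbb{C}$-linear combination of the pairwise distinct monomials $\bm{y}^{\bm{\alpha}}$, $\bm{\alpha}\in V_{S}$. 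Comparing the coefficient of each $\bm{y}^{\bm{\alpha}}$ then forces $\sum_{(\bm{c},\bm{d})\in S_{\bm{\alpha}}}a_{(\bm{c},\bm{d})}=0$, which is the assertion.

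There is no genuine obstacle here; the only points that warrant a word of justification are the two observations in the first paragraph — that $\mathbb{C}[\bm{x}]$ really is a polynomial ring in the $y_{i}$, so that monomials in $\bm{y}$ are $\mathbb{C}$-linearly independent, and that the $\bm{\alpha}\in V_{S}$ are honest non-negative exponent vectors — both of which are immediate from the invertibility of the transformation and the non-negativity of the ${\bm v}^{(p)}_{i},{\bm v}^{(q)}_{i}$. The lemma is then the mechanical step feeding the remainder of the argument, where the description of $\Ker(\sigma)$ in Lemma~\ref{lem:12} is used to promote each relation $\sum_{(\bm{c},\bm{d})\in S_{\bm{\alpha}}}a_{(\bm{c},\bm{d})}=0$ to membership in the ideal $\mathfrak{I}$.
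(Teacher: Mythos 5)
Your argument is correct and matches the paper's: the paper simply states that the lemma follows by ``comparing the coefficient of $\bm{y}^{\bm{\alpha}}$'' in the displayed identity, and you have filled in exactly the justification needed (invertibility of the transformation $\bm{x}\mapsto\bm{y}$, hence linear independence of the distinct monomials $\bm{y}^{\bm{\alpha}}$, $\bm{\alpha}\in V_{S}$). Nothing further is required.
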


Also, from Lemma~$\ref{lem:12}$, we obtain the following lemma. 
\begin{lem}\label{lem:19}
For any $(\bm{c}, \bm{d})$, $(\bm{c}', \bm{d}') \in S_{\bm{\alpha}}$, 
it holds that $\bm{z}^{\bm{c}} \bm{w}^{\bm{d}} - \bm{z}^{\bm{c}'} \bm{w}^{\bm{d}'} \in \mathfrak{I}$. 
\end{lem}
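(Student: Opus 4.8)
The plan is to reduce Lemma~\ref{lem:19} to the following more symmetric statement and prove that by induction: for any $(\bm{c},\bm{d}),(\bm{c}',\bm{d}')\in(\mathbb{Z}_{\geq 0})^{n_p}\times(\mathbb{Z}_{\geq 0})^{n_q}$ with $\sigma(\bm{c},\bm{d})=\sigma(\bm{c}',\bm{d}')$, one has $\bm{z}^{\bm{c}}\bm{w}^{\bm{d}}-\bm{z}^{\bm{c}'}\bm{w}^{\bm{d}'}\in\mathfrak{I}$. Lemma~\ref{lem:19} is exactly the case $(\bm{c},\bm{d}),(\bm{c}',\bm{d}')\in S_{\bm{\alpha}}$, so it suffices to prove the general claim; note that this claim is symmetric under interchanging the two exponent pairs. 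By Lemma~\ref{lem:12}, the hypothesis $(\bm{c}-\bm{c}',\bm{d}-\bm{d}')\in\Ker(\sigma)$ provides $\bm{m}=(m_0,\dots,m_{n_{pq}-1})\in\mathbb{Z}^{n_{pq}}$ with $c_{i+n_{pq}j}-c'_{i+n_{pq}j}=m_i$ for $0\le i\le n_{pq}-1$, $0\le j\le q-1$, and $d_{i+n_{pq}j}-d'_{i+n_{pq}j}=-m_i$ for $0\le i\le n_{pq}-1$, $0\le j\le p-1$. I induct on $\lVert\bm{m}\rVert:=\sum_{i=0}^{n_{pq}-1}\lvert m_i\rvert$. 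If $\lVert\bm{m}\rVert=0$ then $\bm{c}=\bm{c}'$ and $\bm{d}=\bm{d}'$, so the difference is $0\in\mathfrak{I}$.

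For the inductive step, pick $i_0$ with $m_{i_0}\neq 0$; after possibly swapping the two pairs we may assume $m_{i_0}\geq 1$. Then $c_{i_0+n_{pq}j}=c'_{i_0+n_{pq}j}+m_{i_0}\geq 1$ for every $0\le j\le q-1$, so $\bm{z}^{\bm{c}}\bm{w}^{\bm{d}}$ is divisible by the squarefree monomial $\prod_{j=0}^{q-1}z_{i_0+n_{pq}j}$; write $\bm{z}^{\bm{c}}\bm{w}^{\bm{d}}=\bigl(\prod_{j=0}^{q-1}z_{i_0+n_{pq}j}\bigr)N$ with $N$ a monomial. Then
\[
\bm{z}^{\bm{c}}\bm{w}^{\bm{d}}-\Bigl(\prod_{j=0}^{p-1}w_{i_0+n_{pq}j}\Bigr)N=t_{i_0}N\in\mathfrak{I},
\]
and I let $(\bm{c}'',\bm{d}'')$ be the exponent pair of $\bigl(\prod_{j=0}^{p-1}w_{i_0+n_{pq}j}\bigr)N$, which again lies in $(\mathbb{Z}_{\geq 0})^{n_p}\times(\mathbb{Z}_{\geq 0})^{n_q}$. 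Since $\bm{c}-\bm{c}''$ has entry $1$ at the positions $i_0+n_{pq}j$ $(0\le j\le q-1)$ and $\bm{d}-\bm{d}''$ has entry $-1$ at the positions $i_0+n_{pq}j$ $(0\le j\le p-1)$, Lemma~\ref{lem:10} (applied with index $i_0$) gives $\sigma(\bm{c},\bm{d})-\sigma(\bm{c}'',\bm{d}'')=\sum_{j=0}^{q-1}{\bm v}^{(p)}_{i_0+n_{pq}j}-\sum_{j=0}^{p-1}{\bm v}^{(q)}_{i_0+n_{pq}j}=\bm{0}$, so $\sigma(\bm{c}'',\bm{d}'')=\sigma(\bm{c}',\bm{d}')$; moreover a direct check shows the vector attached by Lemma~\ref{lem:12} to $(\bm{c}''-\bm{c}',\bm{d}''-\bm{d}')$ is $\bm{m}-\bm{e}_{i_0}$, with $\lVert\bm{m}-\bm{e}_{i_0}\rVert=\lVert\bm{m}\rVert-1$ since $m_{i_0}\geq 1$. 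By the induction hypothesis $\bm{z}^{\bm{c}''}\bm{w}^{\bm{d}''}-\bm{z}^{\bm{c}'}\bm{w}^{\bm{d}'}\in\mathfrak{I}$, and adding this to the displayed membership yields $\bm{z}^{\bm{c}}\bm{w}^{\bm{d}}-\bm{z}^{\bm{c}'}\bm{w}^{\bm{d}'}\in\mathfrak{I}$.

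The only delicate point is the inductive step: one must ensure the exchange move $\prod_{j}z_{i_0+n_{pq}j}\rightsquigarrow\prod_{j}w_{i_0+n_{pq}j}$ is legitimate, i.e.\ that $\bm{z}^{\bm{c}}$ is genuinely divisible by $\prod_{j}z_{i_0+n_{pq}j}$ — this is exactly where the sign normalization $m_{i_0}\geq 1$ (afforded by the symmetry of the claim) and the nonnegativity of $\bm{c}'$ enter — and that the move keeps the $\sigma$-image fixed while strictly decreasing $\lVert\bm{m}\rVert$, so that the induction terminates. Everything else is routine bookkeeping with Lemmas~\ref{lem:10} and~\ref{lem:12}. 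As a byproduct, combining this general claim with Lemma~\ref{lem:18} immediately yields $\Ker(\rho')\subset\mathfrak{I}$, completing the proof of Theorem~\ref{thm:2}.
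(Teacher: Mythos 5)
Your proof is correct. The underlying mechanism is the same as the paper's --- both arguments invoke Lemma~\ref{lem:12} to produce the vector $\bm{m}$ and then trade $u_{i}=\prod_{j=0}^{q-1}z_{i+n_{pq}j}$ for $v_{i}=\prod_{j=0}^{p-1}w_{i+n_{pq}j}$ via $t_{i}=u_{i}-v_{i}$ --- but the organization differs. The paper performs all exchanges in a single closed-form computation: it splits the indices into $I_{+}$ and $I_{-}$ according to the sign of $m_{i}$, rewrites $\bm{z}^{\bm{c}'}\bm{w}^{\bm{d}'}$ so that its discrepancy with $\bm{z}^{\bm{c}}\bm{w}^{\bm{d}}$ is concentrated in the factors $\prod_{i\in I_{-}}u_{i}^{-m_{i}}$ and $\prod_{i\in I_{+}}v_{i}^{m_{i}}$, and then expands $(v_{i}+t_{i})^{-m_{i}}$ and $(u_{i}-t_{i})^{m_{i}}$ to push the difference into $\mathfrak{I}$. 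You instead perform one exchange at a time and induct on $\lVert\bm{m}\rVert$, using the symmetry of the statement to normalize the sign at the chosen index in place of the $I_{\pm}$ split. Your route requires first restating the lemma for arbitrary pairs in $(\mathbb{Z}_{\geq 0})^{n_{p}}\times(\mathbb{Z}_{\geq 0})^{n_{q}}$ with equal $\sigma$-image, since the intermediate pair $(\bm{c}'',\bm{d}'')$ need not lie in $S$; you correctly make this generalization, and your verification that the exchange preserves the $\sigma$-image (Lemma~\ref{lem:10}), keeps exponents nonnegative, and strictly decreases $\lVert\bm{m}\rVert$ is sound. What you gain is lighter bookkeeping per step and an argument that makes the termination mechanism explicit; what the paper's version gains is an explicit formula for the element $t''$ of $\mathfrak{I}$ realizing the difference, obtained without induction.
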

\begin{proof}
Let $\bm{c} := (c_{0}, c_{1}, \ldots, c_{n_{p} - 1})$, $\bm{d} := (d_{0}, d_{1}, \ldots, d_{n_{q} - 1})$, $\bm{c}' := (c'_{0}, c'_{1}, \ldots, c'_{n_{p} - 1})$, and $\bm{d}' := (d'_{0}, d'_{1}, \ldots, d'_{n_{q} - 1})$. 
Suppose that $(\bm{c}, \bm{d})$, $(\bm{c}', \bm{d}') \in S_{\bm{\alpha}}$. 
Then, since $(\bm{c} - \bm{c}', \bm{d} - \bm{d}') \in \Ker (\sigma)$ holds, 
it follows from Lemma~$\ref{lem:12}$ that there exists $\bm{m} := (m_{0}, m_{1}, \ldots, m_{n_{p q} - 1}) \in \mathbb{Z}^{n_{p q}}$ satisfying 
\[
\bm{c} - \bm{c}' = (\underbrace{\bm{m}, \bm{m}, \ldots, \bm{m}}_{q}), \quad 
\bm{d} - \bm{d}' = - (\underbrace{\bm{m}, \bm{m}, \ldots, \bm{m}}_{p}). 
\]
We define $I_{+} := \{i \mid m_{i} \geq 0,\, 0 \leq i \leq n_{p q} - 1 \}$ and $I_{-} := \{i \mid m_{i} < 0,\, 0 \leq i \leq n_{p q} - 1 \}$. 
From the above, we have 
\begin{align*}
\bm{z}^{\bm{c}'} 
&= \prod_{i = 0}^{n_{p q} - 1} \prod_{j = 0}^{q - 1} z_{i + n_{p q} j}^{c'_{i + n_{p q} j}} 
= \left(\prod_{i \in I_{+}} \prod_{j = 0}^{q - 1} z_{i + n_{p q} j}^{c'_{i + n_{p q} j}} \right) \left(\prod_{i \in I_{-}} \prod_{j = 0}^{q - 1} z_{i + n_{p q} j}^{c_{i + n_{p q} j}} \right) \left(\prod_{i \in I_{-}} u_{i}^{-m_{i}} \right), \\
\bm{w}^{\bm{d}'} 
&= \prod_{i = 0}^{n_{p q} - 1} \prod_{j = 0}^{p - 1} w_{i + n_{p q} j}^{d'_{i + n_{p q} j}} 
= \left(\prod_{i \in I_{-}} \prod_{j = 0}^{p - 1} w_{i + n_{p q} j}^{d'_{i + n_{p q} j}} \right) \left(\prod_{i \in I_{+}} \prod_{j = 0}^{p - 1} w_{i + n_{p q} j}^{d_{i + n_{p q} j}} \right) \left(\prod_{i \in I_{+}} v_{i}^{m_{i}} \right), 
\end{align*}
where $u_{i} := \prod_{j = 0}^{q - 1} z_{i + n_{p q} j}$ and $v_{i} := \prod_{j = 0}^{p - 1} w_{i + n_{p q} j}$. 
Since $t_{i} = u_{i} - v_{i}$ holds for $0 \leq i \leq n_{p q} - 1$, there exist $t, t' \in \mathfrak{I}$ satisfying 
\begin{align*}
\prod_{i \in I_{-}} u_{i}^{- m_{i}} 
&= \prod_{i \in I_{-}} (v_{i} + t_{i})^{- m_{i}} 
= \prod_{i \in I_{-}} v_{i}^{- m_{i}} + t, \\
\prod_{i \in I_{+}} v_{i}^{m_{i}}
&= \prod_{i \in I_{+}} (u_{i} - t_{i})^{m_{i}} 
= \prod_{i \in I_{+}} u_{i}^{m_{i}} + t'. 
\end{align*}
Therefore, there exists $t'' \in \mathfrak{I}$ satisfying 
\begin{align*}
\bm{z}^{\bm{c}'} \bm{w}^{\bm{d}'} 
&= \left(\prod_{i \in I_{+}} \prod_{j = 0}^{q - 1} z_{i + n_{p q} j}^{c'_{i + n_{p q} j}} \right) \left(\prod_{i \in I_{-}} \prod_{j = 0}^{q - 1} z_{i + n_{p q} j}^{c_{i + n_{p q} j}} \right) \left(\prod_{i \in I_{-}} v_{i}^{-m_{i}} \right) \\
&\quad \times 
\left(\prod_{i \in I_{-}} \prod_{j = 0}^{p - 1} w_{i + n_{p q} j}^{d'_{i + n_{p q} j}} \right) \left(\prod_{i \in I_{+}} \prod_{j = 0}^{p - 1} w_{i + n_{p q} j}^{d_{i + n_{p q} j}} \right) \left(\prod_{i \in I_{+}} u_{i}^{m_{i}} \right) + t'' \\
&= \left(\prod_{i \in I_{+}} \prod_{j = 0}^{q - 1} z_{i + n_{p q} j}^{c'_{i + n_{p q} j}} \right) \left(\prod_{i \in I_{+}} u_{i}^{m_{i}} \right) 
\left(\prod_{i \in I_{-}} \prod_{j = 0}^{q - 1} z_{i + n_{p q} j}^{c_{i + n_{p q} j}} \right) \\
&\quad \times 
\left(\prod_{i \in I_{-}} \prod_{j = 0}^{p - 1} w_{i + n_{p q} j}^{d'_{i + n_{p q} j}} \right) \left(\prod_{i \in I_{-}} v_{i}^{-m_{i}} \right) 
\left(\prod_{i \in I_{+}} \prod_{j = 0}^{p - 1} w_{i + n_{p q} j}^{d_{i + n_{p q} j}} \right) + t'' \\
&= \bm{z}^{\bm{c}} \bm{w}^{\bm{d}} + t''. 
\end{align*}
Thus the lemma is proved. 
\end{proof}

The above two lemmas give the following lemma. 

\begin{lem}\label{lem:20}
For any $\bm{\alpha} \in V_{S}$, it holds that $\sum_{(\bm{c}, \bm{d}) \in S_{\bm{\alpha}}} a_{(\bm{c}, \bm{d})} \bm{z}^{\bm{c}} \bm{w}^{\bm{d}} \in \mathfrak{I}$. 
\end{lem}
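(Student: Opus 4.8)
The plan is to combine Lemma~\ref{lem:18} and Lemma~\ref{lem:19} in the standard way one reduces a polynomial relation modulo a binomial ideal. Fix $\bm{\alpha} \in V_S$. If $S_{\bm{\alpha}}$ is empty there is nothing to prove, so pick a distinguished element $(\bm{c}_0, \bm{d}_0) \in S_{\bm{\alpha}}$. Then rewrite
\begin{align*}
\sum_{(\bm{c}, \bm{d}) \in S_{\bm{\alpha}}} a_{(\bm{c}, \bm{d})} \bm{z}^{\bm{c}} \bm{w}^{\bm{d}}
= \sum_{(\bm{c}, \bm{d}) \in S_{\bm{\alpha}}} a_{(\bm{c}, \bm{d})} \left( \bm{z}^{\bm{c}} \bm{w}^{\bm{d}} - \bm{z}^{\bm{c}_0} \bm{w}^{\bm{d}_0} \right)
+ \left( \sum_{(\bm{c}, \bm{d}) \in S_{\bm{\alpha}}} a_{(\bm{c}, \bm{d})} \right) \bm{z}^{\bm{c}_0} \bm{w}^{\bm{d}_0}.
\end{align*}

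By Lemma~\ref{lem:19}, each difference $\bm{z}^{\bm{c}} \bm{w}^{\bm{d}} - \bm{z}^{\bm{c}_0} \bm{w}^{\bm{d}_0}$ lies in $\mathfrak{I}$, so the first sum lies in $\mathfrak{I}$. By Lemma~\ref{lem:18}, the scalar coefficient $\sum_{(\bm{c}, \bm{d}) \in S_{\bm{\alpha}}} a_{(\bm{c}, \bm{d})}$ vanishes, so the second term is zero. Hence the whole expression lies in $\mathfrak{I}$, which is exactly the assertion of Lemma~\ref{lem:20}.

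This is a short bookkeeping argument rather than a substantive one: the real content has already been pushed into Lemmas~\ref{lem:18} and~\ref{lem:19}. The only point requiring a little care is the case $S_{\bm{\alpha}} = \emptyset$ (handle it trivially) and making sure the distinguished element is chosen once and for all before splitting the sum; no genuine obstacle arises. I expect this lemma to feed directly into the proof that $\Ker(\rho') \subset \mathfrak{I}$: summing the conclusion of Lemma~\ref{lem:20} over all $\bm{\alpha} \in V_S$ recovers the original element of $\Ker(\rho')$ as a member of $\mathfrak{I}$.
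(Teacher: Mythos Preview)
Your argument is correct and matches the paper's proof essentially line for line: fix a reference element of $S_{\bm{\alpha}}$, split the sum into differences (handled by Lemma~\ref{lem:19}) plus a scalar multiple of the reference monomial (killed by Lemma~\ref{lem:18}). Your remark that summing over $\bm{\alpha} \in V_S$ then yields $\Ker(\rho') \subset \mathfrak{I}$ is also exactly how the paper proceeds.
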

\begin{proof}
Let $\bm{\alpha} \in V_{S}$ and fix any $(\bm{c}', \bm{d}') \in S_{\bm{\alpha}}$. 
Then, from Lemma~$\ref{lem:19}$, there exists $t_{\bm{\alpha}} \in \mathfrak{I}$ satisfying 
\[
\sum_{(\bm{c}, \bm{d}) \in S_{\bm{\alpha}}} a_{(\bm{c}, \bm{d})} \bm{z}^{\bm{c}} \bm{w}^{\bm{d}} 
= \left(\sum_{(\bm{c}, \bm{d}) \in S_{\bm{\alpha}}} a_{(\bm{c}, \bm{d})} \right) \bm{z}^{\bm{c}'} \bm{w}^{\bm{d}'} 
+ t_{\bm{\alpha}}.
\]
Thus, from Lemma~$\ref{lem:18}$, we have $\sum_{(\bm{c}, \bm{d}) \in S_{\bm{\alpha}}} a_{(\bm{c}, \bm{d})} \bm{z}^{\bm{c}} \bm{w}^{\bm{d}} 
= t_{\bm{\alpha}} \in \mathfrak{I}$. 
\end{proof}

It follows from Lemma~$\ref{lem:20}$ that 
\[
\sum_{(\bm{c}, \bm{d}) \in S} a_{(\bm{c}, \bm{d})} \bm{z}^{\bm{c}} \bm{w}^{\bm{d}} 
= \sum_{\bm{\alpha} \in V_{S}} \sum_{(\bm{c}, \bm{d}) \in S_{\bm{\alpha}}} a_{(\bm{c}, \bm{d})} \bm{z}^{\bm{c}} \bm{w}^{\bm{d}} 
\in \mathfrak{I}. 
\]
This completes the proof of Theorem~$\ref{thm:2}$.

\subsection{Proof of Theorem~$\mathbf{\ref{thm:3}}$}
We prove Theorem~$\ref{thm:3}$. 
Since $\mathbb{C}[\bm{x}]^{D} = \mathbb{C}[\bm{x}]^{\Delta} \supset R_{n}$ holds from Lemma~$\ref{lem:14}$, it only remains to prove $\mathbb{C}[\bm{x}]^{D} \neq R_{n}$. 
From Lemma~$\ref{lem:9}$, we can take $\bm{\alpha}' \in (V_{n} \cap (\mathbb{Z}_{\geq 0})^{n}) \setminus \mathbb{Z}_{\geq 0}\mathchar`-\Span T_{n}$. 
Then, as mentioned in Section~$2.1$, it holds that $\bm{y}^{\bm{\alpha}'} \in \mathbb{C}[\bm{x}]^{D}$. 
On the other hand, it holds that $\bm{y}^{\bm{\alpha}'} \notin R_{n}$. 
To verify this, let 
\[
\psi : \mathbb{C}[z^{(p)}_{i} \mid \text{$p$ is a prime factor of $n$},\; 0 \leq i \leq n_{p} - 1] \to \mathbb{C}[\bm{y}]
\]
be the ring homomorphism defined by $\psi (z^{(p)}_{i}) := \bm{y}^{{\bm v}^{(p)}_{i}}$. 
Note that from Lemma~$\ref{lem:13}$, it follows that 
\[
R_{n} = \mathbb{C}[\bm{y}^{{\bm v}^{(p)}_{i}} \mid \text{$p$ is a prime factor of $n$},\; 0 \leq i \leq n_{p} - 1]. 
\]
We prove $\bm{y}^{\bm{\alpha}'} \notin \Ima(\psi) = R_{n}$ by contradiction. 
Let $n = p_{1}^{l_1} p_{2}^{l_2} \cdots p_{k}^{l_{k}}$ be the prime factorization of $n$, where $k \geq 3$ and $l_j\geq 1$, and 
let $\bm{z} := {}^{t}(\bm{z}_{p_1}, \bm{z}_{p_2}, \ldots, \bm{z}_{p_k})$ with $\bm{z}_{p_j} := {}^{t}(z^{(p_j)}_{0}, z^{(p_j)}_{1}, \ldots, z^{(p_j)}_{n_{p_j} - 1})$. 
Assume that 
\[
\psi \left(\sum_{\bm{c} \in S} a_{\bm{c}} \bm{z}^{\bm{c}} \right) = \bm{y}^{\bm{\alpha}'}
\]
for some $S \subset (\mathbb{Z}_{\geq 0})^{n_{p_1} + n_{p_2} + \cdots + n_{p_k}}$ and $a_{\bm{c}} \in \mathbb{C}$. 
Let $\tau : \mathbb{Z}^{n_{p_1} + n_{p_2} + \cdots + n_{p_k}} \to \mathbb{Z}^{n}$ be the homomorphism of the $\mathbb{Z}$-modules defined by 
\[
\tau (\bm{c}) := \sum_{j = 1}^{k} \sum_{i = 0}^{n_{p_j} - 1} c^{(p_j)}_{i} {\bm v}^{(p_j)}_{i}, 
\]
where $\bm{c} := (\bm{c}_{p_1}, \bm{c}_{p_2}, \ldots, \bm{c}_{p_k})$ with $\bm{c}_{p_j} := (c^{(p_j)}_{0}, c^{(p_j)}_{1}, \ldots, c^{(p_j)}_{n_{p_j} - 1})$. 
Then, from the assumption, we have 
\begin{align*}
\bm{y}^{\bm{\alpha}'} 
= \psi \left(\sum_{\bm{c} \in S} a_{\bm{c}} \bm{z}^{\bm{c}} \right) 
= \sum_{\bm{c} \in S} a_{\bm{c}} \bm{y}^{\tau (\bm{c})} 
= \sum_{\bm{\alpha} \in V_{S}} \sum_{\substack{\bm{c} \in S \\ \tau (\bm{c}) = \bm{\alpha}}} a_{\bm{c}} \bm{y}^{\bm{\alpha}}, 
\end{align*}
where $V_{S} := \{\tau (\bm{c}) \in (\mathbb{Z}_{\geq 0})^{n} \mid \bm{c} \in S \}$. 
From this, it follows that $\bm{\alpha}' \in V_{S} \subset \mathbb{Z}_{\geq 0}\mathchar`-\Span T_{n}$. 
This is a contradiction. 
Thus, we conclude that $\bm{y}^{\bm{\alpha}'} \notin \Ima(\psi) = R_{n}$.

\subsection{Proof of Theorem~$\mathbf{\ref{thm:4}}$}
Theorem~$\ref{thm:4}$ is proved from  
\[
\Theta_{n}(\bm{x}) = \bm{y}^{(1, 1, \ldots, 1)} = \bm{y}^{\sum_{i = 0}^{n_{p} - 1} {\bm v}^{(p)}_{i}} = \prod_{i = 0}^{n_{p} - 1} \bm{y}^{{\bm v}^{(p)}_{i}}
\]
and Lemma~$\ref{lem:13}$.

\subsection{Proof of Theorem~$\mathbf{\ref{thm:5}}$}
We prove Theorem~$\ref{thm:5}$. 
Let $Q$ be the unitary matrix defined as $Q := \frac{1}{\sqrt{n}} (\zeta_{n}^{ij})_{0 \leq i, j \leq n - 1}$ and let $P := \frac{1}{\sqrt{n}} {}^{t} \overline{Q} = \frac{1}{n} (\zeta_{n}^{- i j})_{0 \leq i, j \leq n - 1}$. 
Then, from $\bm{y} = \sqrt{n} Q \bm{x}$, we have $\bm{x} = P \bm{y}$. 
Note that from the equality 
\begin{align*}
\left\{ P^{-1} A P \mid A \in \SL(\mathbb{C}G) \right\} = \left\{ \diag(\bm{\lambda}) \mid \bm{\lambda} = (\lambda_{0}, \lambda_{1}, \ldots, \lambda_{n - 1}) \in \mathbb{C}^{n}, \;\; \lambda_{0} \lambda_{1} \cdots \lambda_{n - 1} = 1 \right\}, 
\end{align*}
where $\diag (\bm{\lambda})$ denotes the diagonal matrix constructed from a vector $\bm{\lambda}$, 
the following holds: Let $\bm{\alpha} = (\alpha_{0}, \alpha_{1}, \ldots, \alpha_{n-1}) \in (\mathbb{Z}_{\geq 0})^{n}$, then   
\begin{align*}
&(P^{-1} A P \bm{y})^{\bm{\alpha}} = \bm{y}^{\bm{\alpha}} \;\; 
\text{for any}\;\; A \in \SL(\mathbb{C}G) \\
&\quad \iff \lambda_{0}^{\alpha_{0}} \lambda_{1}^{\alpha_{1}} \cdots \lambda_{n-1}^{\alpha_{n-1}} = 1 \;\; 
\text{for any} \;\; (\lambda_{0}, \lambda_{1}, \ldots, \lambda_{n-1}) \in \mathbb{C}^{n} \;\; 
\text{with} \;\; \lambda_{0} \lambda_{1} \cdots \lambda_{n-1} = 1 \\
&\quad \iff \alpha_{0} = \alpha_{1} = \cdots = \alpha_{n - 1}. 
\end{align*}
For any $f(\bm{x}) \in \mathbb{C}[\bm{x}]$, 
we define $g(\bm{y}) \in \mathbb{C}[\bm{y}]$ by $g(\bm{y}) := f(P \bm{y}) = f(\bm{x})$. 
Then, we have 
\begin{align*}
f(\bm{x}) \in \mathbb{C}[\bm{x}]^{\SL(\mathbb{C}G)} 
&\iff f(A \bm{x}) = f(\bm{x}) \;\; 
\text{for any} \;\; A \in \SL(\mathbb{C}G) \\ 
&\iff g(P^{-1} A P \bm{y}) = g(\bm{y}) \;\; 
\text{for any} \;\; A \in \SL(\mathbb{C}G) \\ 
&\iff g(\bm{y}) = \sum_{i = 0}^{m} c_{i} \bm{y}^{(i, i, \ldots, i)} \;\; 
\text{for some} \;\; c_{0}, c_{1}, \ldots, c_{m} \in \mathbb{C} \\ 
&\iff g(\bm{y}) \in \mathbb{C}[\bm{y}^{(1, 1, \ldots, 1)}] = \mathbb{C}[\Theta_{n}(\bm{x})] \\ 
&\iff f(\bm{x}) \in \mathbb{C}[\Theta_{n}(\bm{x})]. 
\end{align*}
Thus, it holds that $\mathbb{C}[\bm{x}]^{\SL(\mathbb{C}G)} = \mathbb{C}[\Theta_{n}(\bm{x})]$. 
Also, from Theorem~$\ref{thm:4}$ and Lemma~$\ref{lem:14}$, we obtain $\mathbb{C}[\Theta_{n}(\bm{x})] \subset R_{n} \subset \mathbb{C}[\bm{x}]^{D}$.

\clearpage

\bibliography{reference}
\bibliographystyle{plain}

\medskip
\begin{flushleft}
Faculty of Education, 
University of Miyazaki, 
1-1 Gakuen Kibanadai-nishi, 
Miyazaki 889-2192, 
Japan \\ 
{\it Email address}, Naoya Yamaguchi: n-yamaguchi@cc.miyazaki-u.ac.jp \\ 
{\it Email address}, Yuka Yamaguchi: y-yamaguchi@cc.miyazaki-u.ac.jp 
\end{flushleft}

\begin{flushleft}
Institute of Mathematics for Industry, 
Kyushu University, 
744 Motooka, Nishi-ku, 
Fukuoka 819-0395, 
Japan \\ 
{\it Email address}, Hiroyuki Ochiai: ochiai@imi.kyushu-u.ac.jp 
\end{flushleft}

\end{document}